\newcommand{\myrowcolour}{\rowcolor[gray]{0.925}}
\newtheorem{proposition}{Proposition}[section]
\newtheorem{theorem}[proposition]{Theorem}
\newtheorem{corollary}[proposition]{Corollary}
\newtheorem{lemma}[proposition]{Lemma}
\newtheorem{remark}[proposition]{Remark}
\newtheorem{example}[proposition]{Example}
\newcommand{\nc}{\newcommand}
\nc{\I}{{\mathbf 1}}
\nc{\bN}{{\mathbf N}}
\nc{\bM}{{\mathbf M}}
\nc{\cB}{{\mathcal B}}
\nc{\cM}{{\mathcal M}}
\nc{\R}{{\mathbb R}}
\nc{\N}{{\mathbb N}}
\nc{\Z}{{\mathbb Z}}
\nc{\BX}{{\mathbb X}}
\nc{\BY}{{\mathbb Y}}
\nc{\cX}{{\mathcal X}}
\nc{\cY}{{\mathcal Y}}
\nc{\cN}{{\mathcal N}}
\nc{\cF}{{\mathcal F}}
\DeclareMathOperator{\supp}{supp}
\DeclareMathOperator*{\esssup}{\mathrm{ess\, sup}}
\nc{\tv}{d_{\mathrm TV}}
\nc{\BP}{\mathbb{P}}
\nc{\BE}{\mathbb{E}}
\nc{\BQ}{\mathbb{Q}}
\numberwithin{equation}{section}
\begin{document} 

\renewcommand{\thefootnote}{\fnsymbol{footnote}}
\author{{\sc {\sc Steffen Betsch\footnotemark[1]} \hspace{0.1mm} and G\"unter Last\footnotemark[2]} \\ {\small Karlsruhe Institute of Technology, Institute of Stochastics, 76131 Karlsruhe, Germany.}}
\footnotetext[1]{steffen.betsch@kit.edu
}
\footnotetext[2]{guenter.last@kit.edu
}

\title{On the uniqueness of Gibbs distributions with a non-negative 
and subcritical pair potential} 
\date{\today}
\maketitle

\begin{abstract} 
\noindent 
We prove that the distribution of
a Gibbs process with non-negative pair potential
is uniquely determined as soon as an associated Poisson-driven
random connection model (RCM) does not percolate.
Our proof combines disagreement coupling in continuum
(established in \cite{HTHou17,LastOtto21}) with a coupling 
of a Gibbs process and a RCM. The improvement over previous uniqueness results is illustrated both in theory and simulations.

\end{abstract}

\noindent
{\bf Keywords:} Gibbs process, uniqueness of Gibbs distributions, pair potentials,
disagreement coupling,  Poisson embedding, random connection model,
percolation

\vspace{0.1cm}
\noindent
{\bf AMS MSC 2010:} 60K35, 60G55, 60D05

\section{Introduction}\label{sintro}

Let $(\mathbb{X}, d)$ be a complete separable metric space, denote its Borel-$\sigma$-field by $\mathcal{X}$, and let $\lambda$ be
a locally finite measure on $\BX$.
Denote by $v\colon \mathbb{X}\times\mathbb{X}\to [0,\infty]$ 
a (non-negative) {\em pair potential}, that is, a measurable
and symmetric function. 
A {\em Gibbs process} with pair potential $v$ and
{\em reference measure} $\lambda$ is a {\em point process} $\eta$ on
$\BX$ satisfying, loosely speaking,  
\begin{align*}
\BP\big(\eta(\mathrm{d}x)>0\mid \eta_{\BX\setminus\{x\}}\big)
=\exp\bigg(-\int_{\BX} v(x,y)\,\mathrm{d}\eta(y)\bigg) \mathrm{d}\lambda(x),
\end{align*}
for all $x\in\mathbb{X}$.  Here we interpret
a point process $\eta$ as a random measure on $\mathbb{X}$,
which is integer-valued on bounded sets,
and we write $\eta_B$ for the restriction of $\eta$ to a set $B\in\cX$.
For more details on the point process approach to Gibbs
processes we refer to \cite{NgZe79,MaWaMe79,Dereudre18}
and to Subection \ref{subGibbs}. 
We let $\mathcal{G}(v,\lambda)$ denote the space of distributions
of Gibbs processes with pair potential $v$ and reference measure $\lambda$.

Our assumption below will imply that
\begin{align}\label{e1.3}
\int_{\mathbb{X}} \big(1-e^{-v(x,y)}\big)\,\mathrm{d}\lambda(y) < \infty,\quad \lambda\text{-a.e.\ $x\in\BX$}.
\end{align}
It is then known that $\mathcal{G}(v,\lambda)\neq\varnothing$,
referring to \cite{Ruelle70} for the case $\mathbb{X}=\R^d$ and to \cite{Jansen19}
for the present generality. In this paper we establish an
explicit criterion for the uniqueness of a Gibbs distribution. To do
so we consider the {\em random connection model} (RCM) (see e.g.\
\cite{Pen91,MeesterRoy}) with {\em connection function}
$\varphi:=1-\exp(-v)$ based on a Poisson process $\Phi$ with intensity
measure $\lambda$. In a RCM every pair $x,y$ of distinct points from
$\Phi$ (taking into account multiplicities) is connected with probability $\varphi(x, y)$, independently
for different pairs. This gives a random graph $\Gamma$ whose vertices
are the points of $\Phi$.  For each $x\in\BX$ we consider a RCM
$\Gamma^x$ with connection function $\varphi$ based on the Poisson
process $\Phi$ augmented by the point $x$, as detailed in Subsection
\ref{subrcm}.  Let $C_x$ denote the cluster (component) of this graph
containing $x$, interpreted as a point process on $\BX$.
We say that the RCM is {\em subcritical} if these clusters 
have only a finite number of points, that is,
\begin{align}\label{esubcritical}
\BP\big(C_x(\BX)<\infty\big)=1,\quad \lambda\text{-a.e.\ $x\in\BX$}.
\end{align}
In this case we also say that the pair $(v,\lambda)$ is subcritical.
Note that the degree of $x$ in the RCM $\Gamma^x$ has a Poisson distribution
with parameter $\int_{\mathbb{X}} \big(1-e^{-v(x,y)}\big)\,\mathrm{d}\lambda(y)$.
Therefore \eqref{esubcritical} implies the integrability condition \eqref{e1.3}.

The following theorem is the main result of our paper.
It gives an affirmative answer to a question asked
in \cite{Jansen19}.
By $|A|$ we denote the cardinality of a set $A$.

\begin{theorem}\label{t2} Assume that $(v,\lambda)$ is subcritical.
Then $|\mathcal{G}(v,\lambda)|=1$.
\end{theorem}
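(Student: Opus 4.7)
The plan is to show $\mu_1=\mu_2$ for any two $\mu_1,\mu_2\in\mathcal{G}(v,\lambda)$. By the DLR equations characterizing $\mathcal{G}(v,\lambda)$, two Gibbs distributions agree as soon as they induce the same law on every bounded window $A\in\cX$. I fix such an $A$, choose a bounded exhaustion $A\subset B_n\uparrow\BX$, draw $\eta^i\sim\mu_i$, and condition on the boundary configurations $\psi_i:=\eta^i_{\BX\setminus B_n}$. The DLR equation then forces $\eta^i_{B_n}$ to have the finite-volume Gibbs distribution $P_{B_n}^{\psi_i}$. The task reduces to constructing a single probability space that simultaneously realises $(\eta^1_{B_n},\eta^2_{B_n})$ with these conditional laws and in which the expected number of disagreement points inside $A$ tends to $0$ as $n\to\infty$.

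The heart of the argument is a disagreement coupling in the spirit of \cite{HTHou17,LastOtto21}. On a common probability space I would realise $(\eta^1_{B_n},\eta^2_{B_n})$ through a sequential birth-death / Poisson-embedding scheme driven by a single free Poisson process $\Phi$ on $B_n$ of intensity $\lambda_{B_n}$ together with independent $[0,1]$-uniform marks. Each candidate point is accepted into $\eta^i_{B_n}$ with a probability that depends, through $v$, on the already accepted configuration and on the frozen boundary $\psi_i$. Disagreements can propagate only through interaction edges, and by a careful arrangement of the auxiliary randomness the symmetric difference $\eta^1_{B_n}\triangle\eta^2_{B_n}$ can be dominated by the connected component, in a suitable random graph, of $\supp(\psi_1\triangle\psi_2)\subset\BX\setminus B_n$. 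The novel ingredient advertised in the abstract is to fuse this with a second coupling so that the controlling random graph has \emph{exactly} the law of the RCM with connection function $\varphi=1-e^{-v}$ based on the unconditioned Poisson process $\Phi$, rather than merely being dominated by some RCM with a larger connection function.

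With this coupling in hand, a Slivnyak--Mecke computation on $\Phi$ produces a bound of the form
\begin{align*}
\BE\bigl[|(\eta^1\triangle\eta^2)_A|\bigr]\le\int_A\BP\bigl(C_x\cap(\BX\setminus B_n)\ne\varnothing\bigr)\,\mathrm{d}\lambda(x),
\end{align*}
where $C_x$ is the cluster at $x$ from \eqref{esubcritical}. Subcriticality gives $C_x(\BX)<\infty$ almost surely for $\lambda$-a.e.\ $x$, and since $B_n\uparrow\BX$ it follows that $\BP(C_x\cap(\BX\setminus B_n)\ne\varnothing)\downarrow 0$ for $\lambda$-a.e.\ $x$. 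The integrand is bounded by $1$ and $\lambda(A)<\infty$, so bounded convergence yields $\BE[|(\eta^1\triangle\eta^2)_A|]\to 0$. Hence the limit coupling satisfies $\eta^1_A=\eta^2_A$ almost surely, and since $A$ was arbitrary this gives $\mu_1=\mu_2$.

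The hard part will be executing the disagreement coupling so that the controlling graph is \emph{precisely} the RCM with connection function $\varphi=1-e^{-v}$ driven by an unconditioned Poisson process of intensity $\lambda$, and not just dominated by such an object. The Poisson points driving the sequential update are tied to the boundary conditions through the acceptance ratios, so their joint law with the disagreement edges is a priori not that of a free RCM, and some quantitative slack in the connection function would immediately lead back to the weaker Dobrushin/Ruelle-type conditions already in the literature. Obtaining the sharp identification is the technical novelty of the paper and is where the ``coupling of a Gibbs process and a RCM'' from the abstract enters; once it is in place, the subcritical percolation hypothesis plugs in cleanly and the remaining limit argument is routine.
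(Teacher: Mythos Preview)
Your roadmap matches the paper's strategy: DLR localisation, disagreement coupling inside a bounded window, control of the symmetric difference by an RCM cluster reaching the boundary, and then the subcriticality hypothesis to kill the bound. You also locate the hard step correctly and are candid that you do not execute it. That is the gap.

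The paper fills it not by arranging the acceptance randomness cleverly within a single Poisson embedding on $\BX$, as you propose, but by \emph{lifting} the whole problem to the product space $\BX\times\mathbb{M}$ with $\mathbb{M}=[0,1]^{\N\times\N}$. On this enlarged space one defines, for each $\delta>0$, a purely \emph{hard-core} pair potential via a relation $\sim_\delta$ (built from a countable $\delta$-partition of $\BX$ and the coordinates of the mark) with two properties: (i) the projection onto $\BX$ of the associated hard-core Gibbs process on $\BX\times\mathbb{M}$ converges, as $\delta\downarrow 0$, to the original pair-potential Gibbs process with the given boundary condition; and (ii) the $\sim_\delta$-clusters in the driving Poisson process on $\BX\times\mathbb{M}$ converge to RCM clusters with connection function $\varphi=1-e^{-v}$. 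Because the lifted interaction is hard-core, the disagreement coupling of \cite{LastOtto21} applies verbatim and gives the needed domination at the level of each $\delta$-approximation; the \emph{exact} RCM appears only after letting $\delta\downarrow 0$. This lift-then-limit device is precisely what circumvents your own objection that ``the Poisson points driving the sequential update are tied to the boundary conditions through the acceptance ratios'': in the hard-core lift there are no acceptance ratios, only exclusion, and the soft interaction is recovered by averaging over marks.

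Two smaller points you glide over. First, the boundary configurations $\psi_i=\eta^i_{\BX\setminus B_n}$ are Gibbs, not Poisson, so the bound you write with $C_x$ a free-Poisson RCM cluster needs an intermediate step: the paper uses stochastic domination of a Gibbs process with PI $\kappa\le 1$ by an independent Poisson process, and then superposes the independent Poisson pieces inside and outside $B_\ell$ to recover a full-space Poisson RCM. Second, the paper first reduces to diffuse $\lambda$ by a uniform randomisation, since the mark-space construction and the simplicity arguments rely on the processes having no multiple points.
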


From a percolation perspective it might be helpful to
consider for each $\gamma\ge 0$ a RCM with connection function
$\varphi$ based on a Poisson process with intensity measure $\gamma\lambda$.
Then $\gamma$ can be interpreted as an {\em intensity} 
(or {\em activity}). Define a {\em critical} intensity by  
\begin{align}\label{criticalactivity}
\gamma_c:=\sup\{\gamma\ge 0:\text{$(v,\gamma\lambda)$ is subcritical}\}.
\end{align}
Theorem \ref{t2} immediately implies the following result.

\begin{corollary}\label{c11} Assume that $\gamma<\gamma_c$.
Then $|\mathcal{G}(v,\gamma\lambda)|=1$.
\end{corollary}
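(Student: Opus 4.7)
The plan is to deduce Corollary \ref{c11} from Theorem \ref{t2} by verifying that the pair $(v,\gamma\lambda)$ is itself subcritical whenever $\gamma<\gamma_c$. Once this is done, Theorem \ref{t2} applied with the reference measure $\gamma\lambda$ in place of $\lambda$ gives $|\mathcal{G}(v,\gamma\lambda)|=1$ directly, so the entire work lies in this monotonicity step.

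To establish subcriticality at intensity $\gamma$, first observe that $\gamma<\gamma_c=\sup\{\gamma'\ge 0:(v,\gamma'\lambda)\text{ is subcritical}\}$ implies the existence of some $\gamma'\in(\gamma,\gamma_c)$ for which $(v,\gamma'\lambda)$ is subcritical. I then intend to couple the two RCMs by a standard thinning construction: realize a Poisson process $\Phi'$ with intensity measure $\gamma'\lambda$ and independently Bernoulli-thin each of its points with retention probability $\gamma/\gamma'$; the resulting point process $\Phi$ is Poisson with intensity $\gamma\lambda$. Adding the deterministic point $x$ to both $\Phi'$ and $\Phi$ and using the same $\{0,1\}$-valued edge marks (drawn with probability $\varphi(y,z)$ for each unordered pair) for all pairs that survive in $\Phi\cup\{x\}$ yields a coupling of the augmented RCMs $\Gamma^x$ and $(\Gamma')^x$ in which the vertex and edge sets of $\Gamma^x$ are contained in those of $(\Gamma')^x$.

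Under this coupling, the cluster $C_x$ of $x$ in $\Gamma^x$ is a subgraph of the corresponding cluster $C_x'$ in $(\Gamma')^x$, so $C_x(\BX)\le C_x'(\BX)$. Since $(v,\gamma'\lambda)$ is subcritical, $C_x'(\BX)<\infty$ almost surely for $\lambda$-a.e.\ $x$, and hence $C_x(\BX)<\infty$ almost surely for $\lambda$-a.e.\ $x$. This is exactly \eqref{esubcritical} for the pair $(v,\gamma\lambda)$, and applying Theorem \ref{t2} completes the argument.

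There is no real obstacle: the only point requiring care is that the critical intensity is defined via a supremum, so one must pass through an intermediate $\gamma'$ strictly between $\gamma$ and $\gamma_c$ rather than taking $\gamma'=\gamma_c$ directly. The monotonicity coupling itself is routine.
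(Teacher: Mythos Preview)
Your proof is correct and follows the same route as the paper, which simply remarks that Theorem~\ref{t2} ``immediately implies'' the corollary and gives no further argument. You have made explicit the monotonicity step---via the thinning coupling of $\Phi'$ and $\Phi$---that the paper leaves tacit; this is a reasonable addition, since the definition of $\gamma_c$ as a supremum does not by itself guarantee that every $\gamma<\gamma_c$ lies in the subcritical set without invoking monotonicity in the intensity.
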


If $\mathbb{X}=\R^d$, $\lambda$ is the Lebesgue measure, and $v$ is translation invariant,
then it is known 
that $0<\gamma_c<\infty$, see \cite{MeesterRoy}. But even in this case
Theorem \ref{t2} raises the question for sufficient conditions
implying that $(v,\lambda)$  is subcritical.
By comparison with a branching process (similar to \cite{Pen91})
we prove in Appendix \ref{sbranching} that
\begin{align*}
\esssup_{x\in\BX}\int_{\mathbb{X}} \varphi(x, y) \,\mathrm{d}\lambda(y)<1
\end{align*}
is sufficient for subcriticality (where the essential supremum is with respect to $\lambda$).
Hence we obtain the following corollary, still on the general state space.

\begin{corollary}\label{c1} Assume that
\begin{align}\label{ebrlow2}
\gamma\cdot\esssup_{x\in\BX}\int_{\mathbb{X}} \varphi(x, y) \,\mathrm{d}\lambda(y)<1.
\end{align}
Then $|\mathcal{G}(v,\gamma\lambda)|=1$.
\end{corollary}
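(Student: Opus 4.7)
The plan is to reduce the statement to Corollary~\ref{c11}: since that result gives uniqueness whenever $\gamma<\gamma_c$, it is enough to prove that the hypothesis \eqref{ebrlow2} implies subcriticality of $(v,\gamma\lambda)$, i.e., $\BP(C_x(\BX)<\infty)=1$ for $\lambda$-a.e.\ $x\in\BX$. Following the strategy of \cite{Pen91}, I would establish subcriticality of the RCM by dominating the cluster exploration from $x$ by a subcritical Galton--Watson branching process.

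Explicitly, I would explore $C_x$ in breadth-first order. At each step, the children of a newly visited vertex $y$ are the points of the underlying Poisson process of intensity $\gamma\lambda$ that are connected to $y$ in the RCM; by independence of the edge randomisation, after thinning by $\varphi(y,\cdot)$ these form a Poisson process of intensity $\gamma\,\varphi(y,\cdot)\,\lambda$, with mean $\gamma\int_{\BX}\varphi(y,z)\,\mathrm{d}\lambda(z)$. Declaring all such points to be fresh, distinct offspring, even those coinciding with already-explored vertices, yields a branching tree whose total population stochastically dominates $|C_x|$.

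By \eqref{ebrlow2} there exists $m<1$ with $\gamma\int_{\BX}\varphi(y,z)\,\mathrm{d}\lambda(z)\le m$ for $\lambda$-a.e.\ $y$, so the dominating tree has mean offspring at most $m$ at every vertex. Its expected total population is therefore bounded by $\sum_{n\ge 0}m^n<\infty$, hence almost surely finite, and so is $C_x$. Combined with Corollary~\ref{c11}, this gives $|\mathcal{G}(v,\gamma\lambda)|=1$. The main obstacle I anticipate is the rigorous construction of the coupling: one must argue that at each exploration step the restriction of the Poisson process to the still-uncovered region can be replaced by an independent full copy of $\gamma\lambda$ (thinned by $\varphi(y,\cdot)$) without decreasing the offspring count, which relies on the spatial independence of the Poisson process, the edge-wise independence of the RCM, and a Mecke-type identity to handle the $x$-augmented Palm setup that enters the definition of $C_x$.
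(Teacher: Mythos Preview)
Your proposal is correct: the hypothesis \eqref{ebrlow2} does imply subcriticality of $(v,\gamma\lambda)$, and then Corollary~\ref{c11} (equivalently Theorem~\ref{t2}) gives uniqueness. The branching-process domination you sketch is the right intuition, and the coupling obstacle you flag is genuine but surmountable along the lines you indicate.

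The paper, however, carries out the branching comparison differently. Rather than building an explicit exploration-and-coupling with a Galton--Watson tree, Appendix~\ref{sbranching} works analytically with the pair connectedness function $\tau(x,y)=\BP(x\xleftrightarrow{\Gamma^{x,y}}y)$. Using a Mecke-type identity for the RCM one has $\BE[C_x(\BX)]=1+\int_\BX\tau(x,y)\,\mathrm{d}\lambda(y)$, and by conditioning on the first step one obtains the convolution inequality $\tau\le\varphi+\varphi*\tau$. Iterating gives $\int\tau(x,\cdot)\,\mathrm{d}\lambda\le\sum_{k\ge 1}\int\varphi^{*k}(x,\cdot)\,\mathrm{d}\lambda$, and under \eqref{ebrlow2} the right-hand side is bounded by $\sum_{k\ge1}m^k<\infty$ (the paper in fact proves a more general statement allowing a non-constant majorant $g$). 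This analytic route sidesteps the measurable-coupling issues you worry about and delivers the expectation bound directly; your probabilistic coupling is more hands-on and arguably more transparent, but requires the careful bookkeeping of explored versus unexplored regions that you correctly identify as the main technical cost.
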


Corollary \ref{c1} is the main result in the recent preprint
\cite{HouZass21}, proved there under two additional assumptions
by applying the classical Dobrushin method \cite{Dobrushin68}
to a suitable discretization.
Our Theorem \ref{t2} generalizes this in several way. First of all
we work on a general space $\BX$ and not just on $\R^d$.
Second we do not need any {\em hard core} assumption on $v$.
Neither do we need another technical assumption made in
\cite{HouZass21}. Finally, and perhaps most importantly, the simulations in Section \ref{simulations}
show that we extend the bounds on the uniqueness region much beyond the branching bound
\eqref{ebrlow2}.

Theorem \ref{t2} also considerably generalizes the percolation
criteria derived in \cite{HTHou17} for translation invariant pair potentials in $\R^d$ with
a {\em finite range} $R>0$, that is, $v(x)=0$ for $|x|>R$.
Indeed, the uniqueness result in \cite[Subsection 3.2.1]{HTHou17} 
requires the RCM with connection function 
$\varphi_R(x):=\I\{ |x|\le R\}$
to be subcritical. Since $\varphi\leq\varphi_R$, the critical
intensity associated with $\varphi$ is larger than
the one associated with $\varphi_R$.
As our bound takes into
account the full information contained in the pair potential 
and not just its range, the difference can be very significant as illustrated in Section \ref{simulations}.
The RCM associated with
$\varphi_R$ is referred to as the {\em Gilbert graph}, see e.g.\ \cite{LastPenrose17}.
The corresponding pair potential is $v_R(x)=\infty \cdot\I\{ |x|\leq R\}$,
and describes hard spheres of radius $R/2$ (in equilibrium), where
$\infty \cdot 0=0\cdot\infty := 0$. 
A non-Poisson version of disagreement percolation
was applied in \cite{DeHou21} to prove
uniqueness of the Widom-Rowlinson process
for a certain range of parameters. This Gibbs process is not governed
by a pair potential but enjoys nice and rather specific
monotonicity properties.

We think of Corollary \ref{c11} 
as of an explicit lower bound on the range of uniqueness.
Even though $\gamma_c$ is not explicitly known,
it admits a clear probabilistic meaning and
the Poisson-based RCM can be easily simulated.
To get an impression of the critical
intensities, we have simulated three examples, including
the well-studied Gilbert graph as a benchmark model as well as two interaction functions from the physics literature.
The results are presented in Section \ref{simulations}.

The Dobrushin criterion (mentioned above) was also used in other
papers to establish uniqueness of Gibbs measures, for
instance in \cite{GeoHaeg96} (without giving the details) and
in \cite{CoDaKoPa18}. The results of these papers allow the pair potential
to take negative values but do not provide explicit information
on the domain of uniqueness. A further drawback is the assumption
of a finite range, a severe restriction of generality.
Another method for proving uniqueness of Gibbs distributions are fixed point
methods based on  Kirkwood–Salsburg integral equations
and the fact that the correlation functions determine the distribution of a point
process under suitable assumptions. This method can be traced back to \cite{Ruelle69}.
For some recent contributions we refer to \cite{Jansen19}, which might also serve as a good survey, and \cite{Zass21}.
The uniqueness intensity region  identified 
by this method is characterized by the contractivity
of certain integral operators and does not seem to have an explicit probabilistic
interpretation. In view of the method and the special cases discussed
in \cite{Jansen19}, we expect this region to be comparable with 
the branching bound \eqref{ebrlow2} or its generalization in Appendix \ref{sbranching}.
Still another method for proving uniqueness is to
identify a Gibbs distribution as a stationary and reversible measure
with respect to a suitable Markovian dynamics, cf. \cite{FFG02,SY13}.
Uniqueness follows if the so-called ancestor clans, coming from an embedding into a space-time
Poisson process, are finite. The resulting bounds on the domain of
uniqueness are not explicit but might be comparable with the branching bounds
(see also the discussion in \cite{BHLV20}).

Our main tool for proving Theorem \ref{t2} 
is the disagreement coupling from \cite{HTHou17,LastOtto21}, which was inspired by the results of \cite{BergMaes94} in the discrete setting,
combined with an (approximative) simultaneous coupling with the associated RCM. 
The connection function $\varphi=1-e^{-v}$ was used in \cite{GivenStell90,GeoHaeg96} for a 
Fortuin-Kasteleyn type  coupling 
of a {\em Potts model} (allowing for general pairwise interactions between particles
of different types) and the 
{\em continuum cluster model}. While this coupling
proceeds by conditioning, we use a marked Poisson process
for embedding two Gibbs processes with different boundary
conditions and, at the same time, for constructing a RCM.
This way we can directly refer to the percolation properties
of a Poisson-driven RCM. 
We need to assume that $v\ge 0$. Apart from that,
and the subcriticality, we do not make any further assumptions
such as finite range or strict repulsion.
We believe that our approach will be useful
for analyzing further properties of repulsive Gibbs processes,
even though its details are a bit technical.

\section{Preliminaries}\label{preliminaries}

In this section we collect a few basic definitions and facts
on Gibbs point processes and the random connection model.
A reader familiar with these topics might skip this section
at first reading.

\subsection{Gibbs processes}\label{subGibbs}

Let $(\BX,\mathcal{X})$ denote a Borel space and $\lambda$ a
$\sigma$-finite measure on $\BX$. Let $B_1 \subset B_2 \subset \cdots$
be measurable sets of finite $\lambda$-measure such that
$\bigcup_{\ell = 1}^\infty B_\ell = \BX$.  Let $\cX_b$ denote the
system of all bounded Borel sets, meaning the collection of all sets
which are contained in one of the $B_\ell$.  A measure $\nu$ on
$(\BX, \cX)$ which is finite on $\cX_b$ is called {\em locally
  finite}.  Let $\bN(\BX)$ be the space of all locally finite measures
on $\BX$ which are $\N_0$-valued on $\mathcal{X}_b$, and let
$\cN(\BX)$ denote the smallest $\sigma$-field such that
$\mu\mapsto \mu(B)$ is measurable for all $B\in\cX$. A {\em point
  process} on $\mathbb{X}$ is a random element $\eta$ of
$\bN(\mathbb{X})$, defined over some fixed probability space
$(\Omega,\cF,\BP)$.  The {\em intensity measure} of $\eta$ is the
measure $\BE[\eta]$ defined by $\BE[\eta](B):=\BE[\eta(B)]$,
$B\in\cX$.  By our assumption on $\BX$, every point process $\eta$ is
{\em proper}, that is,
$ \eta = \sum_{n=1}^{\eta(\mathbb{X})}\delta_{X_n}$, where
$\{X_n : n\geq 1\}$ is a collection of random elements with values in
$\mathbb{X}$, see \cite[Section 6.1]{LastPenrose17} for more
details. Given a measure $\nu$ on $\BX$ and $B\in \mathcal X$, we
write $\nu_B$ to indicate the trace measure $ D\mapsto \nu( D\cap B)$.
For $\mu \in \bN(\mathbb{X})$ we write $x \in \mu$ if $\mu(\{x\})>0$.

Let $\kappa\colon\BX\times\bN(\mathbb{X})\to[0, \infty)$ be measurable
and satisfy the cocycle relation,
\begin{align*}
	\kappa(x, \mu) \cdot \kappa(y, \mu + \delta_x)
	= \kappa(y, \mu) \cdot \kappa(x, \mu + \delta_y), 
\quad x, y \in \mathbb{X}, \, \mu \in \mathbf{N}(\mathbb{X}) .
\end{align*}
A point process $\eta$ on $\BX$ is called a {\em Gibbs process}
with {\em Papangelou intensity} (PI) $\kappa$ (and {\em reference measure} $\lambda$) if
\begin{align}\label{eGNZ}
\BE\bigg[\int_{\mathbb{X}} f(x,\eta)\,\mathrm{d}\eta(x)\bigg]=
\BE\bigg[ \int_{\mathbb{X}} f(x,\eta+\delta_x) \, \kappa(x,\eta)\,\mathrm{d}\lambda(x)\bigg]
\end{align}
for each measurable $f\colon\BX\times\bN(\mathbb{X})\to[0, \infty)$.
The latter are the {\em GNZ equations} named after Georgii, Nguyen and Zessin
\cite{Georgii76,NgZe79}.
If $\kappa\equiv 1$ this is the {\em Mecke equation}, characterizing a
{\em Poisson process} with intensity measure $\lambda$, cf. 
\cite[Theorem 4.1]{LastPenrose17}. In that case the distribution of
$\eta$ is denoted by $\Pi_\lambda$.

Equation \eqref{eGNZ} can be generalized. For $m\in\N$ we define
a measurable map $\kappa_m\colon \BX^m\times\bN(\mathbb{X})\to [0, \infty)$ by
\begin{align*}
 \kappa_m(x_1,\ldots,x_m,\mu)
:=\kappa(x_1,\mu)\cdot\kappa(x_2,\mu+\delta_{x_1})\cdots\kappa(x_m,\mu+\delta_{x_1}+\cdots+\delta_{x_{m-1}}).
\end{align*}
Note that $\kappa_1=\kappa$.
If $\eta$ is a Gibbs process with PI $\kappa$ then
we have for each $m\in\N$ and  for each measurable $f\colon\BX^m\times\bN(\mathbb{X})\to[0, \infty)$ that
\begin{align}\label{eGNZmulti}
\BE\bigg[\int_{\mathbb{X}^m} &f(x_1,\ldots,x_m,\eta)\, \mathrm{d}\eta^{(m)}(x_1,\ldots,x_m)\bigg] \notag \\ 
&=\BE \bigg[\int_{\mathbb{X}^m} f(x_1,\ldots,x_m,\eta+\delta_{x_1}+\cdots+\delta_{x_m}) \cdot \kappa_m(x_1,\ldots,x_m,\eta)
\,\mathrm{d}\lambda^m(x_1,\ldots,x_m)\bigg],
\end{align}
where $\mu^{(m)}$ is the $m$-th factorial measure of $\mu$, consulting
\cite{LastPenrose17} for the general definition.  This follows by
induction, using that
\begin{align*}
\kappa_{m+1}(x_1,\ldots,x_{m+1},\mu)=\kappa_m(x_1,\ldots,x_m,\mu) \cdot
\kappa(x_{m+1},\mu+\delta_{x_1}+\cdots+\delta_{x_m}). 
\end{align*}

By the cocycle assumption on $\kappa$, $\kappa_m$ is symmetric in the first $m$ arguments.
The {\em Hamiltonian}
$H\colon\bN(\mathbb{X})\times\bN(\mathbb{X})\to (-\infty,\infty]$ 
(based on $\kappa$) is defined by
\begin{align*}
 H(\mu,\nu):=
\begin{cases}
0,&\text{if $\mu(\BX)=0$}, \\
 -\log \kappa_m(x_1,\ldots,x_m,\nu),&
 \text{if $\mu=\delta_{x_1}+\cdots+\delta_{x_m}$}, \\
 \infty,&\text{if $\mu(\BX)=\infty$}.
 \end{cases}
\end{align*}
For $B\in\cX_b$ the {\em partition function}
$Z_B\colon\bN(\mathbb{X})\to[0,\infty]$ is defined by
\begin{align}\label{epartition}
  Z_B(\nu):= e^{\lambda(B)} \int_{\mathbf{N}(\mathbb{X})} 
e^{-H(\mu,\nu)}\,\mathrm{d}\Pi_{\lambda_B}(\mu)
             = 1 + \sum_{m = 1}^\infty \frac{1}{m!} \int_{B^m} \kappa_m(x_1, \dots, x_m, \nu) 
\, \mathrm{d}\lambda^m(x_1, \dots, x_m),
\end{align}
where \cite[Exercise 3.7]{LastPenrose17} establishes the equality.
We clearly have that $Z_B(\nu)\ge 1$.
For $\nu\in\bN(\mathbb{X})$ and $B \in \mathcal{X}_b$,
the {\em Gibbs measure} $\mathrm{P}_{B, \nu}$ on $\bN(\mathbb{X})$ is defined by
\begin{align*}
  \mathrm{P}_{B, \nu} := Z_B(\nu)^{-1} \, e^{\lambda(B)}\int_{\bN(\mathbb{X})} 
\I\{\mu\in\cdot\} \, e^{-H(\mu,\nu)}\, \mathrm{d}\Pi_{\lambda_B}(\mu),
\end{align*}
provided that $Z_B(\nu)<\infty$, and an expansion similar to
\eqref{epartition} is possible. If $Z_B(\nu)=\infty$ we set
$\mathrm{P}_{B, \nu} \equiv 0$.  The measure $\mathrm{P}_{B, \nu}$
is concentrated on $\bN_B(\mathbb{X})$, the set of all measures
$\mu \in \mathbf{N}(\mathbb{X})$ with $\mu(B^c) = 0$, where
$B^c := \mathbb{X} \setminus B$, and $\mathrm{P}_{B, \nu}$ is the distribution of a Gibbs process with PI
$\kappa^{(B, \nu)}(x, \mu) := \kappa(x, \nu + \mu) \, \I_B(x)$ (and
reference measure $\lambda$).  It was proved in \cite{MaWaMe79,NgZe79}
that if $\eta$ is a Gibbs process with PI $\kappa$ then
\begin{align*}
\BP\big(Z_B(\eta_{B^c})<\infty\big)=1,\quad B\in\cX_b,
\end{align*}
and, for each measurable $f\colon\bN(\mathbb{X})\to [0, \infty)$,
\begin{align}\label{edlr}
 \BE\big[f(\eta_B)\mid \eta_{B^c}\big]
  =\int_{\mathbf{N}(\mathbb{X})} f(\mu) \, \mathrm{d}\mathrm{P}_{B, \eta_{B^c}}(\mu),
 \quad B\in\cX_b,
\end{align}
where relations involving conditional expectations are assumed
to hold almost surely.
These are the {\em DLR-equations}, cf. \cite{Ruelle70,Kallenberg17,Mase00}.

The Gibbs distributions we deal with are based on a non-negative pair potential
$v\colon\BX \times \BX\to [0,\infty]$, a symmetric and measurable function.
In this case we define the corresponding PI $\kappa$ by
\begin{align*}
\kappa(x,\mu):=\exp\bigg(- \int_{\mathbb{X}} v(x,y)\, \mathrm{d}\mu(y)\bigg),\quad x \in\BX, \, \mu \in\bN(\BX).
\end{align*}
If \eqref{e1.3} holds, then a Gibbs process with this
PI is known to exist, referring to \cite{Ruelle70} for the case $\BX=\R^d$,
and to \cite{Jansen19}  
for the more general case considered here.
If $v$ is allowed to take negative values the existence proofs
become more complicated, see e.g. \cite{Ruelle70,Mase00,DeDrGeorgii12,DVass19}.

\subsection{The random connection model}\label{subrcm}

In the setting of Section \ref{subGibbs}, suppose that
$\varphi\colon\BX \times \BX\to[0,1]$ is a measurable and symmetric
function. Let $\Phi=\sum^{\Phi(\BX)}_{n=1}\delta_{X_n}$ be a point
process.  Let $U_{i,j}$, $i,j\in\N$, be independent random variables,
uniformly distributed on the unit interval $[0,1]$, and such that the
double sequence $(U_{i,j})_{i, j \in \N}$ is independent of $\Phi$.  Let $\prec$ be
an order on $\BX$ with
$\{ (x, y) \in \BX^2 : x \prec y \} \in \mathcal{X}^{\otimes 2}$.  Let
$\BX^{[2]}$ denote the space of all sets $e\subset\BX$ containing
exactly two elements, which is a measurable space in its own right. We
define a point process $\Gamma$ on $\BX^{[2]}$ by
\begin{align} \label{eGamma}
\Gamma:=\sum^{\Phi(\BX)}_{i,j=1}
\I\big\{X_i\prec X_j, \,U_{i,j}\le \varphi(X_i,X_j)\big\} \cdot
\delta_{\{X_i,X_j\}}.
\end{align}
This is the {\em random connection model} (RCM) (based on $\Phi$) with 
{\em connection function} $\varphi$. 
We interpret $\Gamma$ as a random graph with vertex set $\Phi$.
As a rule, there are isolated points from $\Phi$ with no emanating edges.
While the definition of $\Gamma$ depends on the ordering
of the points of $\Phi$, its distribution does not.

We say that $x,y\in\Phi$ are connected
via $\Gamma$ if either $x=y$ or there exist $n\in\N$ and
$e_1,\ldots,e_n\in\Gamma$ such that $x\in e_1$, $y\in e_n$ and
$e_i\cap e_{i + 1}\ne\varnothing$ for $i \in \{ 1, \dots, n-1 \}$.  In
this case we write $x \xleftrightarrow{\Gamma} y$.  
Let $\Gamma^z$ be a random connection model based on
$\Phi^z:=\Phi+\delta_z$.
The cluster of $z$
in $\Gamma^z$ is the point process $C_z$ on $\BX$ given by
\begin{align*}
  C_z:=\int_{\mathbb{X}} \I\{x\in\cdot \} \, \I\{z \xleftrightarrow{\Gamma^z} x\} \, \mathrm{d}\Phi^z(x).
\end{align*}
It charges $z$ and all points from $\Phi$ which are connected to $z$ via $\Gamma^z$. 

Let us now assume that $\Phi$ is a Poisson
process with intensity measure $\lambda$.
Then we say that the RCM is {\em subcritical} if
\eqref{esubcritical} holds.
In this case we also say that the pair $(\varphi,\lambda)$ is subcritical.
If $\varphi=1-e^{-v}$ for a pair potential $v$, then we
say that the pair $(v,\lambda)$ is subcritical.


\begin{remark}\rm
	Assume that $\Phi$ is a Poisson process with a non-diffuse intensity measure
	$\lambda$. Then some of the points of $\Phi$ come
	with multiplicities, and there are other, more refined, ways to define a RCM.
	Indeed when connecting points (taking into account multiplicities)
	we may introduce multiple edges between their positions. And we may also allow for
	(multiple) loops. The result would be a random multigraph.
	To avoid such technicalities we stick to the more simple and intuitive definition
	\eqref{eGamma} and the notion of connectedness introduced thereafter.
\end{remark}

\section{Proof of Theorem \ref{t2}}\label{sproof}

The existence part of Theorem \ref{t2} is settled by Theorem B.1 of \cite{Jansen19}.
Therefore, it suffices to show that $|\mathcal{G}(v,\lambda)|\le 1$.
We first argue that we can assume, without loss of generality, that $\lambda$
is diffuse. To do so, we use {\em randomization}, a well-known technique in
point process theory, which was already tailored to Gibbs processes in \cite{SchuhStuck14}. We consider the space $\tilde\BX:=\BX\times[0,1]$ and equip it with the product metric and the product $\tilde\lambda$ 
of $\lambda$ and the Lebesgue measure on $[0,1]$.
Define $\tilde v\colon\tilde\BX\times\tilde\BX\to [0,\infty]$ by
$\tilde v\big( (x,r),(y,s) \big):=v(x,y)$.
Let $\tilde C_{(z, r)}$ denote the cluster of $(z, r) \in \tilde \BX$ in a RCM based on $\tilde \Phi^{(z, r)}$, where $\tilde \Phi :=\sum_{n=1}^{\Phi(\mathbb{X})}\delta_{(X_n,U_n)}$ is a {\em uniform randomization} of the Poisson process $\Phi = \sum_{n=1}^{\Phi(\mathbb{X})}\delta_{X_n}$ on $\BX$ with intensity measure $\lambda$. Here the randomization is defined with the help of independent random variables $U_n$, $n \in \N$, that are uniformly distributed on $[0,1]$, with the whole sequence $(U_n)_{n \in \N}$ being independent of $\Phi$. The process $\tilde \Phi$ is a Poisson process on $\tilde \BX$ with intensity measure $\tilde\lambda$, and we have
\begin{align*}
	\tilde C_{(z, r)}(\BX)
	\leq C_z(\BX)  \quad \mathbb{P}\text{-a.s.} ,
\end{align*}
with $C_z$ defined in Section \ref{subrcm}.
Hence, if $(v, \lambda)$ is subcritical then so is $(\tilde v, \tilde \lambda)$.
Moreover, if $\eta$ is a Gibbs process in $\BX$ with PI $\kappa$ and reference measure $\lambda$, it is easy to show via the GNZ-equations \eqref{eGNZ} that these equations hold with $(\eta,\lambda,\kappa)$ replaced by 
$(\tilde\eta,\tilde\lambda,\tilde\kappa)$, where $\tilde \eta$ is a uniform randomization of $\eta$ and $\tilde\kappa\big( (x,r), \psi \big) = \kappa\big( x, \psi(\cdot\times[0,1]) \big)$. In particular, each uniform randomization of $\eta$ is a Gibbs process
with PI $\tilde\kappa$ and reference measure $\tilde\lambda$. Thus, if $\eta$ is a point process with $\mathbb{P}^\eta \in \mathcal{G}(v,\lambda)$, where $\mathbb{P}^X$ denotes the distribution of a random element $X$, then $\mathbb{P}^{\tilde\eta} \in \mathcal{G}(\tilde v,\tilde\lambda)$ for any uniform randomization $\tilde \eta$ of $\eta$. Now, if $(v, \lambda)$ is subcritical and Theorem \ref{t2} holds for diffuse reference measures, then $|\mathcal{G}(\tilde v,\tilde\lambda)| = 1$ as $(\tilde v, \tilde \lambda)$ is subcritical. Consequently, uniform randomizations $\tilde \eta, \tilde \eta'$ of two point processes $\eta, \eta'$ with $\mathbb{P}^\eta, \mathbb{P}^{\eta'} \in \mathcal{G}(v,\lambda)$ satisfy $\mathbb{P}^{\tilde\eta} = \mathbb{P}^{\tilde \eta'}$, and we obtain, for each $A \in \mathcal{N}(\BX)$,
\begin{align*}
	\mathbb{P}(\eta \in A)
	= \mathbb{P}\big( \tilde\eta(\cdot \times [0, 1]) \in A \big)
	= \mathbb{P}\big( \tilde\eta'(\cdot \times [0, 1]) \in A \big)
	= \mathbb{P}(\eta' \in A).
\end{align*}
We conclude that $\mathbb{P}^\eta = \mathbb{P}^{\eta'}$, that is, $|\mathcal{G}(v,\lambda)| = 1$.

In the remainder of the section we 
let the space $(\BX, d)$ and the pair $(v,\lambda)$ be as in the introduction and assume, in addition,
that $\lambda$ is diffuse. The definition of $\mathbf{N}(\BX)$ (see Section \ref{subGibbs}) is based on the collection $\mathcal{X}_b$ of $d$-bounded Borel subsets of $\BX$. For the moment we dispense with the subcriticality assumption on $(v, \lambda)$ and suppose that only the weaker assumption \eqref{e1.3} holds.
It is an easy exercise to construct sets
$B_1 \subset B_2 \subset \cdots$ in $\mathcal{X}_b$ such that
$\bigcup_{\ell = 1}^\infty B_\ell = \BX$ and
\begin{align} \label{eq. 3.1}
	\int_{B_\ell} \int_{\BX} \big( 1 - e^{-v(x, y)} \big) 
\mathrm{d}\lambda(y) \, \mathrm{d}\lambda(x)< \infty , \quad \ell \in \N.
\end{align}
We denote by $\mathcal{X}_b^*$ the
collection of all sets from $\mathcal{X}$ which are contained in one
of the $B_\ell$. The collection $\mathcal{X}_b^*$ is a ring over $\BX$
with $\sigma(\mathcal{X}_b^*) = \mathcal{X}$. Moreover, the algebra
\begin{align*}
	\mathcal{Z} := \bigcup_{C \in \mathcal{X}_b^*} \mathcal{N}_C(\BX)
\end{align*}
generates $\mathcal{N}(\BX)$, where we denote by $\mathcal{N}_C(\BX)$
the sub-$\sigma$-field of $\mathcal{N}(\BX)$ generated by all maps
$\mu \mapsto \mu(D)$ for $D \in \mathcal{X}$ with $D \subset C$. This precise construction of $\mathcal{X}_b^*$ is motivated by
\cite[Equation (B.6)]{Jansen19} and it proves useful later on in order
to avoid additional integrability assumptions.

Put $B_0 := \varnothing$. For each $\ell \in \N$ we consider a Borel
isomorphism $\iota_\ell$ from the Borel space
$\big( B_\ell \setminus B_{\ell - 1}, \mathcal{X} \cap (B_\ell \setminus B_{\ell - 1}) \big)$
onto a Borel subset of $(\ell - 1,\ell]$. Define the injective and measurable map
$\iota : \mathbb{X} \to (0, \infty)$ as
\begin{align*}
	\iota(x) := \sum_{k = 1}^\infty \iota_k(x) \, \I_{B_k \setminus B_{k - 1}}(x)
\end{align*}
and define an order on $\mathbb{X}$ via $x \prec y$ iff
$\iota(x) < \iota(y)$. Observe that for $x \in B_\ell$ and
$y \in B_\ell^c$ we always have $x \prec y$.

Put $\varphi := 1 - e^{-v}$ as a function on
$\mathbb{X} \times \mathbb{X}$. In order to apply the disagreement coupling from \cite{LastOtto21}, we want to approximate the RCM with connection function $\varphi$ by suitably interpreting a Poisson process on a rich product space. To this end, we consider as a mark space
$\mathbb{M} := [0, 1]^{\mathbb{N} \times \mathbb{N}}$, the space of
doubly indexed sequences in $[0, 1]$, endowed with the product
$\sigma$-field, and denote by $\mathbb{Q}$ the probability measure on
$\mathbb{M}$ given as an infinite product of uniform distributions on
$[0, 1]$. Moreover, we need to be able to separate the points in our space with the help of countable partitions. Thus, for each $\delta > 0$, we let
$D_1^\delta, D_2^\delta, \dotso \in \mathcal{X}$ be a partition of
$\mathbb{X}$ such that for any two points $x, y \in \BX$ there exists some $\delta_0 > 0$ so that $x$ and $y$ are separated by the $\delta$-partition for every $\delta < \delta_0$, where the points being separated means that they lie in different sets of the partition. Such directed partitions can always be constructed in separable metric spaces.
Define the measurable map
$R_\delta\colon (\mathbb{X} \times \mathbb{M})^2 \to [0, 1]$ as
\begin{align*}
	R_\delta(x, r, y, s)
	:= \sum_{i, j = 1}^\infty \I_{D_i^\delta}(x) 
\I_{D_j^\delta}(y) \, \big( \I\{x \prec y\} \cdot r_{i,j} + \I\{y \prec x\} \cdot s_{j,i} \big)
\end{align*}
as well as a relation $\sim_{\delta}$ on $\mathbb{X} \times \mathbb{M}$ via
\begin{align*}
	(x, r) \sim_{\delta} (y, s) \quad \Longleftrightarrow \quad R_\delta(x, r, y, s) \leq \varphi(x, y) .
\end{align*}

Similar to $\mathbf{N}(\mathbb{X})$, denote by
$\mathbf{N}(\mathbb{X} \times \mathbb{M})$ the set of all measures
$\psi$ on $\mathbb{X} \times \mathbb{M}$ such that
$\psi(B \times \mathbb{M}) \in \mathbb{N}_0$ for each
$B \in \mathcal{X}_b$, endowed with the apparent
$\sigma$-field. Whenever $\psi$ is a measure on
$\mathbb{X} \times \mathbb{M}$ we write
$\bar\psi:=\psi(\cdot \times \mathbb{M})$ for its projection onto
$\mathbb{X}$. Conversely, if $\mu$ is a counting measure on $\BX$, we
construct a measure $\hat\mu$ on $\BX \times \mathbb{M}$ by endowing each
point of $\mu$ with a fixed (but arbitrary) mark $s \in\mathbb{M}$. 
Given some set $B \in \mathcal{X}$ and a measure
$\psi$ on $\mathbb{X} \times \mathbb{M}$, we write
$\psi_{B} := \psi\big( \cdot \cap (B \times \mathbb{M}) \big)$
for the restriction of $\psi$ onto $B \times \mathbb{M}$, and we denote in a
generic way by $\mathbf{N}_{fs}$ the set of finite simple counting measures. Here a counting measure is understood to be simple if it assigns to one-point-sets measure either $0$ or $1$.

For a point $(x, r) \in \mathbb{X} \times \mathbb{M}$ and a 
set $S \subset \mathbb{X} \times \mathbb{M}$ we write
$(x, r) \sim_\delta S$ if $(x, r) \sim_\delta (y, s)$ for some
$(y, s) \in S$, and likewise $(x, r) \not\sim_\delta S$ if $(x, r)$ is
not connected to any point in $S$ via $\sim_\delta$. We also use this
notation for $\psi\in \mathbf{N}(\mathbb{X} \times \mathbb{M})$,
formally meaning that $S$ is chosen as
\begin{align*}
	S = \supp\psi := \big\{(y,s) \in \mathbb{X} \times
	\mathbb{M} : \psi\big( \{ (y, s) \} \big) > 0 \big\} .
\end{align*}
We say that
$(x,r)$ and $(y,s)$ are connected via $\psi$ (and $\sim_\delta$),
written as $(x,r) \overset{\psi}{\sim}_\delta (y, s)$, if there
exist $k \in \mathbb{N}_0$ and
$(y_1, s_1), \dots, (y_k, s_k) \in \psi$ such that
$(y_j, s_j) \sim_\delta (y_{j + 1}, s_{j + 1})$ for $j = 0, \dots, k$,
with $(y_0, s_0) := (x, r)$ and $(y_{k + 1}, s_{k + 1}) := (y, s)$.

For $\delta > 0$, let
$\kappa_\delta\colon\mathbb{X} \times \mathbb{M} \times
\mathbf{N}(\mathbb{X} \times \mathbb{M}) \to [0, \infty)$ be given
through
\begin{align*}
\kappa_\delta(x, r, \psi)
:= \I\big\{ (x, r) \not\sim_\delta \psi \big\}
=\exp\bigg(-\int_{\mathbb{X} \times \mathbb{M}}- \log\big( \I\big\{ (x, r) \not\sim_\delta (y, s) \big\} \big) 
\, \mathrm{d}\psi(y, s) \bigg).
\end{align*}
The map $\kappa_\delta$ is obviously measurable and corresponds to
the PI of a pair interaction Gibbs process with hard core type pair
potential
$\big( (x, r), (y, s) \big) \mapsto \infty \cdot \I\big\{ (x, r)
\sim_\delta (y, s) \big\}$. For
$(x, r) \in \mathbb{X} \times \mathbb{M}$ and
$\psi \in \mathbf{N}(\mathbb{X} \times \mathbb{M})$ we call
\begin{align*}
C_\delta(x, r,\psi)
:= \int_{\mathbb{X} \times \mathbb{M}} \I\big\{ (y, s) \in \cdot \, \big\} \, \I\big\{ (x, r) 
\overset{\psi}{\sim}_\delta (y, s) \big\} \, \mathrm{d}\psi(y,s)
\end{align*}
the $\psi$-cluster of $(x,r)$ with respect to $\sim_\delta$. It is
easy to see that
$(x, r, \psi) \mapsto C_\delta(x, r, \psi) \in \mathbf{N}(\mathbb{X}
\times \mathbb{M})$ is a measurable mapping. Note that $(x, r) \not\sim_\delta \psi$
iff $C_\delta(x, r,\psi)=\mathbf{0}$,
where $\mathbf{0}$ denotes the null measure on $\BX \times \mathbb{M}$.
Therefore
\begin{align*}
\kappa_\delta(x, r, \psi)=\kappa_\delta\big( x, r, C_\delta(x, r, \psi) \big) .
\end{align*}

Define $\kappa\colon\mathbb{X} \times \mathbf{N}(\mathbb{X}) \to [0, \infty)$,
\begin{align*}
\kappa(x, \mu)
:= \exp\bigg( - \int_\mathbb{X} v(x, y) \, \mathrm{d}\mu(y) \bigg),
\end{align*}
the PI of a Gibbs process on $\mathbb{X}$ with pair potential $v$. As
in Section \ref{subGibbs}, we denote by $\mathrm{P}_{B,\nu}$ the
distribution of a (finite) Gibbs process with PI $\kappa^{(B,\nu)}$
(and reference measure $\lambda$), where $B \in \mathcal{X}_b$ and
$\nu \in \mathbf{N}(\mathbb{X})$. We now prove, in two steps, a
projection property of Gibbs processes. More specifically, we show
that (in the limit $\delta \downarrow 0$) the projection of a Gibbs
process on $\mathbb{X} \times \mathbb{M}$ with PI $\kappa_\delta$
onto $\mathbb{X}$ gives a Gibbs process with PI $\kappa$. The projection
property that is established in Proposition 2.1 of \cite{GeoHaeg96}, though
dealing specifically with the Potts model in $\R^d$, is conceptually
related.

\begin{lemma} \label{lemma 1} Let $\ell \in \mathbb{N}$ and
  $\psi\in \mathbf{N}_{fs}(\mathbb{X} \times \mathbb{M})$ with
  $\psi(B_\ell \times \mathbb{M}) = 0$.  For each $\delta > 0$, let
  $\xi_\delta$ be a Gibbs process on
  $\mathbb{X} \times \mathbb{M}$ with PI
  $\kappa_\delta^{(B \times \mathbb{M},\psi)}$ and reference
  measure $\lambda \otimes \mathbb{Q}$. 
Then, for every set
  $E \in \mathcal{N}(\mathbb{X})$,
\begin{align*}
\lim_{\delta \downarrow 0} \mathbb{P}\big(\bar\xi_\delta \in E \big)
=\mathrm{P}_{B_\ell, \bar\psi}(E).
\end{align*}
\end{lemma}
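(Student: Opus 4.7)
The plan is to unroll both probabilities as partition-function expansions and show termwise convergence by integrating out the marks first, exploiting that the $\delta$-partitions eventually separate any finite collection of distinct points.

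First I would write out $\mathbb{P}(\bar\xi_\delta \in E)$ via the expansion \eqref{epartition} applied to $\kappa_\delta^{(B_\ell \times \mathbb{M}, \psi)}$: since this PI is $\{0,1\}$-valued of hard-core type and is supported on $B_\ell \times \mathbb{M}$,
\begin{align*}
\mathbb{P}\big(\bar\xi_\delta \in E\big) = \frac{1}{Z_\delta(\psi)} \sum_{m=0}^\infty \frac{1}{m!} \int_{(B_\ell \times \mathbb{M})^m} \I\bigg\{\sum_{i=1}^m \delta_{x_i} \in E\bigg\} \prod_{i=1}^m \kappa_\delta\bigg(x_i, r_i, \psi + \sum_{j<i}\delta_{(x_j,r_j)}\bigg) \, \mathrm{d}(\lambda \otimes \mathbb{Q})^m,
\end{align*}
where $Z_\delta(\psi)$ is the normalizer obtained by dropping the indicator. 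The target $\mathrm{P}_{B_\ell,\bar\psi}(E)$ admits the analogous expansion with $(\lambda,\kappa_m(\cdot,\bar\psi))$ in place of $(\lambda\otimes\mathbb{Q},\prod_i \kappa_\delta)$.

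The core step is identifying the $\delta \downarrow 0$ limit of the marginalized PI. Fix $m\ge 1$ and consider $(x_1,\dots,x_m)\in B_\ell^m$ that are pairwise distinct and distinct from the finitely many $x$-coordinates appearing in $\bar\psi$; by diffuseness of $\lambda$ and finiteness of $\psi$ this holds $\lambda^m$-a.e. By the directed-partition property there exists $\delta_0 = \delta_0(x_1,\dots,x_m,\psi)>0$ such that for every $\delta<\delta_0$ all points in $\{x_1,\dots,x_m\}\cup\supp\bar\psi$ lie in distinct cells of the $\delta$-partition. Since $x_i\in B_\ell$ and $\supp\bar\psi \subset B_\ell^c$, we have $x_i\prec y$ for every $y\in\supp\bar\psi$, and for each pair $x_i\prec x_j$ the order is also fixed. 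Inspecting the definition of $R_\delta$, one sees that every connection indicator appearing in $\prod_i \kappa_\delta$ is read off a \emph{distinct} coordinate of the mark $r_i$ of the smaller endpoint; in particular, these indicators are independent under $\mathbb{Q}^m$. Hence for all $\delta<\delta_0$,
\begin{align*}
\int_{\mathbb{M}^m}\prod_{i=1}^m \kappa_\delta\bigg(x_i,r_i,\psi+\sum_{j<i}\delta_{(x_j,r_j)}\bigg)\,\mathrm{d}\mathbb{Q}^m
= \prod_{i<j} e^{-v(x_i,x_j)} \cdot \prod_{i=1}^m e^{-\int v(x_i,y)\,\mathrm{d}\bar\psi(y)} = \kappa_m(x_1,\dots,x_m,\bar\psi),
\end{align*}
using $1-\varphi = e^{-v}$ and the definition of $\kappa_m$ in Section \ref{subGibbs}.

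With the pointwise limit in hand, two applications of dominated convergence finish the job. For each $m$, the integrand above is bounded by $1$ and $\lambda(B_\ell)^m<\infty$, so integrating the identity against $\I\{\sum_i\delta_{x_i}\in E\}\,\mathrm{d}\lambda^m$ on $B_\ell^m$ gives the $m$-th term of $\mathrm{P}_{B_\ell,\bar\psi}$ in the limit. Summing in $m$, the dominating series $\sum_m \lambda(B_\ell)^m/m! = e^{\lambda(B_\ell)}$ is finite, so we may interchange sum and limit. Applying the same argument with $E = \mathbf{N}(\mathbb{X})$ yields $Z_\delta(\psi)\to Z_{B_\ell}(\bar\psi)$, and the limit is strictly positive (it is $\geq 1$) and finite (bounded by $e^{\lambda(B_\ell)}$), so the ratio converges and equals $\mathrm{P}_{B_\ell,\bar\psi}(E)$. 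The main obstacle is Step~2: setting up the bookkeeping so that each pairwise connection event is read from a distinct coordinate of the marks for small $\delta$, which is where the ordering $\prec$ (placing all of $B_\ell$ before $B_\ell^c$) and the directed-partition property are used in an essential way.
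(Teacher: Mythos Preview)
Your proof is correct and follows essentially the same route as the paper's: expand both sides via the partition-function series, integrate out the marks for $\lambda^m$-a.e.\ distinct tuples $(x_1,\dots,x_m)$ once the $\delta$-partition separates them, identify the result with $\kappa_m(x_1,\dots,x_m,\bar\psi)$, and finish with dominated convergence termwise and then in $m$, handling the normaliser by taking $E=\bN(\BX)$. The only cosmetic difference is that the paper orders the $x_i$ by $\prec$ at the outset (invoking symmetry of $\kappa_{\delta,m}$) whereas you keep the ordering arbitrary and argue directly that the relevant mark coordinates are distinct; both lead to the same computation.
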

\begin{proof} For notational convenience we abbreviate $B:= B_\ell$.  For
  $\delta > 0$ and $E \in \mathcal{N}(\mathbb{X})$, the probability
  $\mathbb{P}\big( \bar\xi_\delta \in E \big)$ is given by
\begin{align*}
\frac{1}{Z_{\delta, B \times \mathbb{M}}(\psi)} \bigg[ \I_E(\mathbf{0})& 
+ \sum_{m = 1}^\infty \frac{1}{m!} \int_{B^m} \I_E\Big( \sum_{j=1}^m \delta_{x_j} \Big)  \\ 
		&\cdot \bigg( \int_{\mathbb{M}^m} 
\kappa_{\delta, m}(x_1, r_1, \dots, x_m, r_m,\psi) \, \mathrm{d}\mathbb{Q}^m(r_1, \dots, r_m) \bigg) 
\mathrm{d}\lambda^m(x_1, \dots, x_m) \bigg],
	\end{align*}
where $Z_{\delta, B \times \mathbb{M}}$ is the
partition function corresponding to the PI $\kappa_\delta$
and the measure $\lambda \otimes \mathbb{Q}$. Denote by
$y_1, \dots, y_k \in B^c$ the points of
$\bar\psi$. For $x_1, \dots, x_m \in B$ with
$x_1 \prec \dotso \prec x_m$ we have $x_m \prec y_j$ for each
$j \in \{1, \dots, k\}$, and we can find $\delta_0 > 0$ such
that the points $x_1, \dots, x_m, y_1, \dots, y_k$ lie in different sets of the $\delta$-partition for each
$\delta < \delta_0$. By definition of $\kappa_\delta$,
for each such choice we get
\begin{align*}
		\int_{\mathbb{M}^m}\kappa_{\delta, m}(x_1, r_1, \dots, x_m, r_m,\psi) \, 
\mathrm{d}\mathbb{Q}^m(r_1, \dots, r_m) 
&= \prod_{1 \leq i < j \leq m} \big( 1 - \varphi(x_i, x_j) \big) 
\, \prod_{i = 1}^m \prod_{j = 1}^k \big( 1 - \varphi(x_i, y_j) \big) \\
&= \kappa_m(x_1, \dots, x_m, \bar\psi) .
	\end{align*}
With the symmetry properties of $\kappa_{\delta, m}$ and the fact that $\lambda$ is diffuse, 
dominated convergence (using $\kappa_{\delta, m} \leq 1$) implies for each $F \in \mathcal{N}(\mathbb{X})$ that
\begin{align*}
  \lim_{\delta \downarrow 0} \sum_{m = 1}^\infty \frac{1}{m!} 
&\int_{B^m} \I_F\Big( \sum_{j = 1}^m \delta_{x_j} \Big) \cdot \bigg( \int_{\mathbb{M}^m}\kappa_{\delta, m}(x_1, r_1, \dots, x_m, r_m,\psi) 
\, \mathrm{d}\mathbb{Q}^m(r_1, \dots, r_m) \bigg) \mathrm{d}\lambda^m(x_1, \dots, x_m) \\
&= \sum_{m = 1}^\infty \frac{1}{m!} \int_{B^m} \I_F\Big( \sum_{j = 1}^m \delta_{x_j} \Big) 
\, \kappa_m(x_1, \dots, x_m, \bar\psi) \, \mathrm{d}\lambda^m(x_1, \dots, x_m) .
	\end{align*}
 Applied to
$F = \mathbf{N}(\mathbb{X})$ this yields
$\lim_{\delta \downarrow 0} Z_{\delta, B \times \mathbb{M}}(\psi) = Z_{B}(\bar\psi)$. 
A further application of the limit relation (to $F = E$), and the
observation from the beginning of this proof, imply the claim.
\end{proof}

Let $\mathrm{P}^\delta_{B_\ell \times \mathbb{M}, \psi}$ denote the
distribution of a (finite) Gibbs process with PI
$\kappa_\delta^{(B_\ell \times \mathbb{M}, \psi)}$ and
reference measure $\lambda \otimes \mathbb{Q}$. Then the previous lemma
reads as
\begin{align*}
  \lim_{\delta \downarrow 0}\mathrm{P}^\delta_{B_\ell \times \mathbb{M}, \psi}
\big(\big\{\nu\in\mathbf{N}(\mathbb{X}\times\mathbb{M}):\bar\nu \in E \big\} \big)
=\mathrm{P}_{B_\ell, \bar\psi}(E), \quad E \in \mathcal{N}(\mathbb{X}).
\end{align*}
However, this relation being true only for finite boundary conditions
$\psi$ is not enough to consider infinite range interactions. 
Fortunately the integrability assumption on the pair potential $v$ allows us to extract
more information. Recall that we associate with each $\mu\in \mathbf{N}(\mathbb{X})$ 
a measure $\hat\mu\in \mathbf{N}(\mathbb{X}\times\mathbb{M})$ by 
endowing each point of $\mu$ with a fixed  mark $s \in \mathbb{M}$. Also, keep in mind that any Gibbs process with a diffuse reference measure is simple, which follows immediately from the DLR equations \eqref{edlr} and the fact that a Poisson process with diffuse intensity measure is simple (cf. Proposition 6.9 of \cite{LastPenrose17}).

\begin{lemma} \label{lemma 2} Let $\ell \in \mathbb{N}$.
Let $\eta$ denote a Gibbs process on $\mathbb{X}$ with PI $\kappa$ and reference measure
  $\lambda$. Then we have for each $E \in \mathcal{N}(\mathbb{X})$ that
  \begin{align*}
\lim_{\delta \downarrow 0} ~ \mathbb{E} 
\Big|\mathrm{P}^\delta_{B_\ell \times \mathbb{M}, \hat\eta_{B_\ell^c}}
\big( \big\{\nu\in \mathbf{N}(\mathbb{X} \times \mathbb{M}):\bar\nu \in E \big\} 
\big) - \mathrm{P}_{B_\ell, \eta_{B_\ell^c}}(E) \Big|= 0.
\end{align*}
\end{lemma}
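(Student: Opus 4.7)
The plan is to approximate the infinite boundary condition $\hat\eta_{B_\ell^c}$ by its restriction to the bounded set $B_k\setminus B_\ell$, apply Lemma \ref{lemma 1} pathwise to the resulting finite configuration, and control the truncation error uniformly in $\delta$ using the integrability \eqref{eq. 3.1}. Writing $B:=B_\ell$ and fixing $E\in\mathcal{N}(\BX)$, for each $k\geq\ell$ the configuration $\hat\eta_{B_k\setminus B}$ is $\BP$-a.s.\ a finite simple counting measure on $\BX\times\mathbb{M}$ supported outside $B\times\mathbb{M}$: finite because $\BE[\eta(B_k)]\leq\lambda(B_k)<\infty$ (by \eqref{eGNZ} with $\kappa\leq 1$), and simple by the remark preceding this lemma. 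Lemma \ref{lemma 1} applied pathwise gives $\lim_{\delta\downarrow 0}\mathrm{P}^\delta_{B\times\mathbb{M},\hat\eta_{B_k\setminus B}}(\bar\nu\in E)=\mathrm{P}_{B,\eta_{B_k\setminus B}}(E)$ $\BP$-a.s., and bounded convergence upgrades this to $L^1$-convergence. By the triangle inequality the claim thus reduces to controlling, uniformly in $\delta$ as $k\to\infty$,
\begin{align*}
\Delta_k^\delta:=\BE\Bigl|\mathrm{P}^\delta_{B\times\mathbb{M},\hat\eta_{B^c}}(\bar\nu\in E)-\mathrm{P}^\delta_{B\times\mathbb{M},\hat\eta_{B_k\setminus B}}(\bar\nu\in E)\Bigr|\quad\text{and}\quad\Delta_k:=\BE\Bigl|\mathrm{P}_{B,\eta_{B^c}}(E)-\mathrm{P}_{B,\eta_{B_k\setminus B}}(E)\Bigr|.
\end{align*}

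For $\Delta_k^\delta$ I would write each conditional probability as $N/Z$ via the expansion \eqref{epartition}. Since $Z\geq 1$ and $0\leq N\leq Z$, the elementary inequality $|a/b-c/d|\leq|a-c|+|b-d|$ (valid for $0\leq a\leq b$, $0\leq c\leq d$, $b,d\geq 1$) reduces matters to bounding $|N(\hat\eta_{B^c})-N(\hat\eta_{B_k\setminus B})|$ and the analogous difference for $Z$. Since enlarging the boundary can only shrink $\kappa_{\delta,m}$, pathwise
\begin{align*}
0\leq\kappa_{\delta,m}(x_1,r_1,\ldots,x_m,r_m,\hat\eta_{B_k\setminus B})-\kappa_{\delta,m}(x_1,r_1,\ldots,x_m,r_m,\hat\eta_{B^c})\leq\sum_{i=1}^m\I\bigl\{(x_i,r_i)\sim_\delta\hat\eta_{B_k^c}\bigr\}.
\end{align*}
Integrating $r_i$ against $\BQ$ and applying a union bound over the points of $\hat\eta_{B_k^c}$, using that for $x\prec y$ the $\BQ$-probability of $(x,r)\sim_\delta(y,s)$ equals $\varphi(x,y)$, yields the $\delta$-free bound $\int_{B_k^c}\varphi(x_i,y)\,\mathrm{d}\eta(y)$. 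Summing the series in $m$ by symmetry and applying $\BE\eta\leq\lambda$ give
\begin{align*}
\Delta_k^\delta\leq 2e^{\lambda(B)}\int_B\int_{B_k^c}\varphi(x,y)\,\mathrm{d}\lambda(y)\,\mathrm{d}\lambda(x).
\end{align*}
The bound for $\Delta_k$ is entirely analogous, with $\kappa_{\delta,m}$ replaced by $\kappa_m$ and using the identity $\kappa_m(\ldots,\nu_1)=\kappa_m(\ldots,\nu_2)\exp(-\sum_i\int v(x_i,y)\,\mathrm{d}(\nu_1-\nu_2)(y))$ together with $1-e^{-\sum a_i}\leq\sum(1-e^{-a_i})$ to produce the same $\int_B\int_{B_k^c}\varphi$ estimate. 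By \eqref{eq. 3.1} and dominated convergence both bounds vanish as $k\to\infty$, independently of $\delta$, so the result follows by sending first $\delta\downarrow 0$ (using Lemma \ref{lemma 1} for fixed $k$) and then $k\to\infty$.

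The main obstacle I anticipate is establishing the $\delta$-uniform bound for $\Delta_k^\delta$: at fixed $\delta>0$, distinct points of $\hat\eta_{B_k^c}$ lying in a common partition cell $D_j^\delta$ are coupled through the shared coordinate $r_{i,j}$ of the mark $r_i$, so one has to verify that the crude union bound still produces $\int_{B_k^c}\varphi(x_i,y)\,\mathrm{d}\eta(y)$ regardless of this dependence. Once this uniformity is secured, the remainder of the argument is essentially routine bookkeeping.
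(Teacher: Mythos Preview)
Your proposal is correct and follows essentially the same route as the paper: truncate the boundary to $B_k\setminus B$, invoke Lemma~\ref{lemma 1} for the finite configuration, and control the truncation error uniformly in $\delta$ by bounding the $\kappa_{\delta,m}$-difference through the event that some $(x_i,r_i)\sim_\delta\hat\eta_{B_k^c}$, then integrating out the mark and using $\BE\eta\le\lambda$ together with \eqref{eq. 3.1}. The paper performs the triangle inequality at the level of the $\kappa_m$-integrands and assembles the ratio afterwards, whereas you split at the level of the probabilities and use the neat inequality $|a/b-c/d|\le|a-c|+|b-d|$; the substance is the same.

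Your anticipated obstacle is a non-issue. The union bound $\BQ\big(\bigcup_{y}\{r:(x_i,r)\sim_\delta(y,s)\}\big)\le\sum_{y}\BQ\big((x_i,r)\sim_\delta(y,s)\big)=\int_{B_k^c}\varphi(x_i,y)\,\mathrm{d}\eta(y)$ holds for \emph{arbitrary} events, regardless of whether the events for different $y$ in the same cell $D_j^\delta$ share the coordinate $r_{i,j}$. No independence is needed, so the $\delta$-uniform estimate goes through exactly as you wrote it.
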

\begin{proof} As in the previous proof we abbreviate $B:=B_\ell$.
First of all, note that, for any $n > \ell$ and ($\lambda^m$-almost) all $x_1, \dots, x_m \in B$,
\begin{align} \label{eq lemma proof 1}
  \mathbb{E} \bigg| &\kappa_m(x_1, \dots, x_m, \eta_{B^c}) - 
\int_{\mathbb{M}^m}\kappa_{\delta, m}(x_1, r_1, \dots, x_m, r_m, \hat\eta_{B^c}) 
\, \mathrm{d}\mathbb{Q}^m(r_1, \dots, r_m) \bigg| \nonumber \\
  &\leq \mathbb{E} \big| \kappa_m(x_1, \dots, x_m, \eta_{B^c}) - 
\kappa_m(x_1, \dots, x_m, \eta_{B_n \setminus B}) \big| \nonumber \\
  &\quad + \mathbb{E} \bigg| \kappa_m(x_1, \dots, x_m, \eta_{B_n \setminus B}) - 
\int_{\mathbb{M}^m}\kappa_{\delta, m}(x_1, r_1, \dots, x_m, r_m, \hat\eta_{B_n \setminus B}) 
\, \mathrm{d}\mathbb{Q}^m(r_1, \dots, r_m) \bigg| \nonumber \\
  &\quad + \int_{\mathbb{M}^m} \mathbb{E} \big| 
\kappa_{\delta,m}(x_1, r_1, \dots, x_m, r_m, \hat\eta_{B_n \setminus B}) -
\kappa_{\delta, m}(x_1, r_1, \dots, x_m, r_m, \hat\eta_{B^c})
    \big| \, \mathrm{d}\mathbb{Q}^m(r_1, \dots, r_m).
\end{align}
By monotone convergence we have
$\lim_{n \to \infty} \kappa(x, \mu_{B_n})=\kappa(x,\mu)$
for all $x \in \mathbb{X}$ and
$\mu \in \mathbf{N}(\mathbb{X})$. Thus, by definition of
$\kappa_m$ and dominated convergence (using that $\kappa_m \leq 1$),
\begin{align*}
\lim_{n \to \infty} \, \mathbb{E} \big| \kappa_m(x_1, \dots, x_m, \eta_{B^c}) 
- \kappa_m(x_1, \dots, x_m, \eta_{B_n \setminus B}) \big|
= 0.
\end{align*}

For each fixed $n > \ell$, the proof of Lemma \ref{lemma 1} and dominated convergence yield
\begin{align*}
\lim_{\delta \downarrow 0} \, \mathbb{E} \bigg| \kappa_m(x_1, \dots, x_m, \eta_{B_n \setminus B}) 
- \int_{\mathbb{M}^m} \kappa_{\delta, m}(x_1, r_1, \dots, x_m, r_m, \hat\eta_{B_n \setminus B}) 
\, \mathrm{d}\mathbb{Q}^m(r_1, \dots, r_m) \bigg|
= 0.
\end{align*}
	
As for the last term in \eqref{eq lemma proof 1}, recall that
$\kappa_{\delta, m}$ is nothing but an indicator function. More precisely,
$\kappa_{\delta, m}(x_1, r_1, \dots, x_m, r_m,\hat\eta_{B^c})$ is equal to $1$ if there is no
$\sim_{\delta}$-connection between any of the points
$(x_1, r_1), \dots, (x_m, r_m)$ and none of these points is
connected to $\hat\eta_{B^c}$, and it is equal to $0$ otherwise. If
$\kappa_{\delta, m}(x_1, r_1, \dots, x_m, r_m,\hat\eta_{B^c})=1$ then clearly also
$\kappa_{\delta, m}(x_1, r_1, \dots, x_m, r_m,\hat\eta_{B_n \setminus B})=1$. Hence the only situation in
which the difference appearing in the last term of 
\eqref{eq lemma proof 1} can give a value different from $0$ is if
one of the points $(x_1, r_1), \dots, (x_m, r_m)$ is connected
to $\hat\eta_{B_n^c}$. Hence, we obtain for each $\delta>0$ and $n \in \N$ that
\begin{align*}
I_{\delta, n} &:=\int_{\mathbb{M}^m} 
\mathbb{E} \big|\kappa_{\delta, m}(x_1, r_1, \dots, x_m, r_m, \hat\eta_{B_n \setminus B}) 
-\kappa_{\delta, m}(x_1, r_1, \dots, x_m, r_m, \hat\eta_{B^c}) \big| 
\, \mathrm{d}\mathbb{Q}^m(r_1, \dots, r_m) \\
&\,\leq \int_{\mathbb{M}^m} \mathbb{E} \Big[ \I\big\{ (x_j, r_j) \sim_\delta \hat\eta_{B_n^c} 
\text{ for at least one } j \in \{ 1, \dots, m\} \big\} \Big] \, \mathrm{d}\mathbb{Q}^m(r_1, \dots, r_m) \\
&\,\leq \sum_{j = 1}^m \mathbb{E} \bigg[ \int_{B_n^c \times \mathbb{M}} 
\int_{\mathbb{M}^m} \I\big\{ (x_j, r_j) \sim_\delta (x, r) \big\} 
\, \mathrm{d}\mathbb{Q}^m(r_1, \dots, r_m) \, \mathrm{d} \hat\eta(x, r) \bigg].
\end{align*}
By construction, the marks of $\hat\eta$ 
are not used in any decision about connections in
the above term as $x_1, \dots, x_m$ are points in $B$
and thus always smaller than points in $\eta_{B_n^c}$ with
respect to the order $\prec$ on $\mathbb{X}$, so only the
marks $r_1, \dots, r_m$ matter. Thus, we arrive at
\begin{align*}
I_{\delta, n}&\leq\sum_{j = 1}^m \mathbb{E} \bigg[ \int_{B_n^c} \int_{\mathbb{M}^m} 
\I\big\{ (x_j, r_j) \sim_\delta (x, s) \big\} \, \mathrm{d}\mathbb{Q}^m(r_1, \dots, r_m) 
\, \mathrm{d} \eta(x) \bigg] \\
&= \sum_{j = 1}^m \mathbb{E} \bigg[ \int_{B_n^c} \varphi(x_j, x) \, \mathrm{d} \eta(x) \bigg] \\
&\leq \sum_{j = 1}^m \int_{B_n^c} \varphi(x_j, x) \, \mathrm{d} \lambda(x)
	\end{align*}
using that $\eta$ is a Gibbs process with PI $\kappa$ (and
$\kappa \leq 1$). This last term, however, goes to $0$ as
$n \to \infty$ by \eqref{e1.3}, and so does $I_{\delta, n}$ (uniformly in $\delta$). Therefore, the left-hand side of
\eqref{eq lemma proof 1} converges to $0$ as $\delta \downarrow 0$. 

Dominated convergence implies for each $E \in \mathcal{N}(\mathbb{X})$ that
\begin{align*}
\sum_{m = 1}^\infty \frac{1}{m!} &\int_{B^m} \I_E\Big( \sum_{j = 1}^m \delta_{x_j} \Big) \cdot \bigg( \int_{\mathbb{M}^m} 
\kappa_{\delta, m}(x_1, r_1, \dots, x_m, r_m, \hat\eta_{B^c}) 
\, \mathrm{d}\mathbb{Q}^m(r_1, \dots, r_m) \bigg) \mathrm{d}\lambda^m(x_1, \dots, x_m) \\
                          &\overset{L^1(\mathbb{P})}{\longrightarrow} ~
\sum_{m = 1}^\infty\frac{1}{m!} \int_{B^m} \I_E\Big( \sum_{j = 1}^m \delta_{x_j} \Big) 
\, \kappa_m(x_1,\dots,x_m,\eta_{B^c})\, \mathrm{d}\lambda^m(x_1,\dots,x_m)
	\end{align*}
as $\delta \downarrow 0$. It follows immediately that 
$Z_{\delta, B \times \mathbb{M}}(\hat\eta_{B^c}) 
\overset{L^1(\mathbb{P})}{\longrightarrow} Z_{B}(\eta_{B^c})$ 
as well as, for each $E \in \mathcal{N}(\mathbb{X})$,
\begin{align*}
Z_{\delta, B \times \mathbb{M}}(\hat\eta_{B^c}) \cdot 
\mathrm{P}^\delta_{B \times \mathbb{M}, \hat\eta_{B^c}}
\big( \big\{ \nu \in \mathbf{N}(\mathbb{X} \times \mathbb{M}) : 
\bar\nu \in E \big\} \big)
\overset{L^1(\mathbb{P})}{\longrightarrow} 
~ Z_{B}(\eta_{B^c}) \cdot \mathrm{P}_{B, \eta_{B^c}}(E)
\end{align*}
both as $\delta \downarrow 0$. Using that the occurring partition 
functions are always $\geq 1$ and bounded by $e^{\lambda(B)}$, the assertion 
follows from dominated convergence.
\end{proof}

Apart from this projection property of the hard core type Gibbs
processes in the extended state space, the construction via the
relation $\sim_{\delta}$ has another useful feature. It
allows an approximation of the RCM by considering a Poisson
process on $\BX \times \mathbb{M}$ and constructing the connections
via $\sim_{\delta}$. In the following result we show that in the limit
$\delta \downarrow 0$ the RCM is indeed recovered.

\begin{lemma} \label{lemma 3 approx of RCM} Let $\Psi$ be a
Poisson process on $\BX \times \mathbb{M}$ with intensity measure
$\lambda \otimes \mathbb{Q}$.
Let $\ell \in \N$,
$x \in B_\ell$ and
$\mu \in \mathbf{N}_{fs}(\BX) \cap \mathbf{N}_{B_\ell^c}(\BX)$. 
Denote by $\Gamma_{B_\ell}^{x, \mu}$
the RCM with connection function $\varphi = 1 - e^{-v}$ based
on $\bar\Psi_{B_\ell} + \mu + \delta_x$. Then, 
\begin{align*}
\lim_{\delta \downarrow 0} 
\int_{\mathbb{M}}\mathbb{P}\Big( (x, r) \overset{\Psi_{B_\ell}}{\sim_{\delta}} \hat\mu \Big)\,\mathrm{d}\mathbb{Q}(r)
= \mathbb{P}\Big( x \xleftrightarrow{\Gamma_{B_\ell}^{x, \mu}} \mu \Big).
\end{align*}
\end{lemma}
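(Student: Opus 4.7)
The idea is to condition on $\bar\Psi_{B_\ell}$, use the separating property of the directed partitions to show that for $\delta$ small enough the $\sim_\delta$-edge indicators on the (finite) set of relevant points form an independent Bernoulli family matching the RCM, and then average out by dominated convergence.

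Since $\lambda$ is diffuse and $\lambda(B_\ell)<\infty$, $\bar\Psi_{B_\ell}$ is $\mathbb{P}$-almost surely a finite simple point process $\sum_{n=1}^{N}\delta_{X_n}$ whose points are pairwise distinct as well as distinct from $x$ and from $y_1,\dots,y_K$, where $\mu=\sum_{k=1}^{K}\delta_{y_k}$. Conditionally on $\bar\Psi_{B_\ell}$, the marks $R_1,\dots,R_N$ carried by $\Psi_{B_\ell}$ are i.i.d.\ $\mathbb{Q}$-distributed. I would condition on $\bar\Psi_{B_\ell}$ and work with the finite vertex set $T=\{x,X_1,\dots,X_N,y_1,\dots,y_K\}$, viewing the left-hand side as a probability under the joint law of $(r,R_1,\dots,R_N)$, with $r$ distributed according to $\mathbb{Q}$ and independent of everything else.

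By the separating property of $(D_i^\delta)_{i\in\mathbb{N}}$, there exists a $\bar\Psi_{B_\ell}$-measurable $\delta_0>0$ such that, for every $\delta<\delta_0$, the points of $T$ lie in pairwise distinct cells of the $\delta$-partition. For such $\delta$ and any distinct $a,b\in T$ with $a\in D_i^\delta$, $b\in D_j^\delta$ and $a\prec b$, the definition of $R_\delta$ collapses to
\begin{align*}
\I\{(a,r_a)\sim_\delta(b,r_b)\}=\I\{(r_a)_{i,j}\le\varphi(a,b)\},
\end{align*}
so each edge-decision consults a single coordinate of the mark of the $\prec$-smaller endpoint. Because different choices of $b$ for fixed $a$ consult different column indices $j$, because the marks of distinct points of $T$ are mutually independent (the mark $r$ of $x$ is $\mathbb{Q}$-distributed and independent; the $R_n$ are i.i.d.\ $\mathbb{Q}$-distributed), and because the deterministic marks of $\hat\mu$ are never consulted (in any $\sim_\delta$-path from $(x,r)$ to $\hat\mu$ through $\Psi_{B_\ell}$ each $y_k$ appears only as an endpoint paired with a partner in $\{x\}\cup\bar\Psi_{B_\ell}\subset B_\ell$, which is $\prec$-smaller than $y_k$ and thus supplies the mark), the family of indicators $\I\{a\sim_\delta b\}$, $a\prec b$ in $T$, is jointly independent with $\mathbb{P}(\I\{a\sim_\delta b\}=1)=\varphi(a,b)$---which is exactly the law of the edge-indicators of $\Gamma_{B_\ell}^{x,\mu}$.

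Consequently, for $\delta<\delta_0$ the event $\{(x,r)\overset{\Psi_{B_\ell}}{\sim_\delta}\hat\mu\}$ is the same Boolean function of those indicators as $\{x\xleftrightarrow{\Gamma_{B_\ell}^{x,\mu}}\mu\}$---on the RCM side one may restrict to first-hit paths so that intermediate vertices lie in $\bar\Psi_{B_\ell}$---hence the two conditional probabilities given $\bar\Psi_{B_\ell}$ coincide. Letting $\delta\downarrow 0$ pointwise and invoking dominated convergence (both integrands are bounded by $1$) yields the claim. The main obstacle is the independence verification: one must ensure both that different edges never re-use the same coordinate of the same mark, which is provided by separation into distinct cells, and that $\hat\mu$'s deterministic marks do no damage, which holds because $\prec$ puts all of $B_\ell^c$ above $B_\ell$ and intermediate vertices on the left-hand side are restricted to $\Psi_{B_\ell}$.
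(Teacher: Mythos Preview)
Your proof is correct and follows essentially the same route as the paper: condition on $\bar\Psi_{B_\ell}$, use the separating property of the $\delta$-partitions on the resulting finite vertex set so that for all small $\delta$ the $\sim_\delta$-edge indicators form an independent Bernoulli family with the RCM parameters, observe that the marks of $\hat\mu$ are irrelevant because the $y_k$ always sit $\prec$-above $B_\ell$, and finish by dominated convergence. The paper's own argument is terser (it asserts the equality of conditional probabilities in one line) but is the same idea; your explicit verification that distinct edges consult distinct coordinates and your remark about first-hit paths on the RCM side are exactly the points that justify that line.
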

\begin{proof} Set $B:=B_\ell$.
Recalling that a Poisson process with diffuse intensity measure is simple, we have that, for $\mathbb{P}^{\bar\Psi_B}$-a.e.\ $\nu \in \mathbf{N}_{fs}(\BX) \cap \mathbf{N}_B(\BX)$ 
(write  $\nu = \sum_{j = 1}^k \delta_{x_j}$) the conditional distribution of
$\Psi_B$ given $\bar\Psi_B = \nu$ is
\begin{align*}
\int_{\mathbb{M}^k} \I\Big\{ \sum_{j = 1}^k \delta_{(x_j, r_j)} \in \cdot \, \Big\} \, 
\mathrm{d}\mathbb{Q}^k(r_1, \dots, r_k) .
\end{align*}
Choose $\delta > 0$ so small that the (finitely many) points in $\mu + \nu$ are
separated by the $\delta$-partition. 
By definition of $\sim_{\delta}$ (rendering the marks of $\hat\mu$ irrelevant)
and of the RCM we have
\begin{align*}
\int_{\mathbb{M}}\mathbb{P}
\Big( (x, r) \overset{\Psi_B}{\sim_{\delta}} \hat\mu \mid \bar\Psi_B = \nu \Big)\,\mathrm{d}\mathbb{Q}(r)
= \mathbb{P}\Big( x \xleftrightarrow{\Gamma_B^{x, \mu}} \mu \mid \bar\Psi_B = \nu \Big).
\end{align*}
Dominated convergence gives
\begin{align*}
\lim_{\delta \downarrow 0} 
\int_{\mathbb{M}}\mathbb{P}\Big( (x, r) \overset{\Psi_B}{\sim_{\delta}}\hat\mu \Big)\,\mathrm{d}\mathbb{Q}(r) 
&= \lim_{\delta \downarrow 0}\int_{\bN(\mathbb{X})} \int_\mathbb{M} 
\mathbb{P}\Big( (x, r) \overset{\Psi_B}{\sim_{\delta}} 
\hat\mu \mid \bar\Psi_B = \nu \Big) \, \mathrm{d}\mathbb{Q}(r) \, \mathrm{d}\mathbb{P}^{\bar\Psi_B}(\nu) \\
&= \int_{\bN(\mathbb{X})} \mathbb{P}\Big( x \xleftrightarrow{\Gamma_B^{x, \mu}} \mu \mid \bar\Psi_B = \nu \Big) 
\, \mathrm{d}\mathbb{P}^{\bar\Psi_B}(\nu) \\
&= \mathbb{P}\Big( x \xleftrightarrow{\Gamma_B^{x, \mu}} \mu \Big),
\end{align*}
as asserted.
\end{proof}

Just like for the projection property, a suitable approximation allows to
consider in Lemma \ref{lemma 3 approx of RCM} infinite boundary conditions (coming from a Gibbs
process). The proof of the 
following result shows why we have introduced the collection $\mathcal{X}_b^*$.

\begin{lemma} \label{lemma 4 approx of RCM unbounded} Let $\Psi$
be a Poisson process on $\BX \times \mathbb{M}$ with intensity
measure $\lambda \otimes \mathbb{Q}$. 
Let $\ell \in \N$ and $C \in \mathcal{X}$ with $C \subset B_\ell$. 
Suppose that $\eta$ is a Gibbs process on $\BX$ with PI $\kappa$ (i.e., with pair potential
$v$) and reference measure $\lambda$, such that $\eta$ is
independent of $\Psi$ and independent of the double sequence
that is used to construct the RCMs. 
For each $x\in\mathbb{X}$ we let $\Gamma_{\ell}^{x, \eta}$ be a RCM with
connection function $\varphi = 1 - e^{-v}$ based on
$\bar\Psi_{B_\ell} + \eta_{B_\ell^c} + \delta_x$. Then,
\begin{align*}
		\lim_{\delta \downarrow 0} \int_{C \times \mathbb{M}} 
\mathbb{P}\Big( (x, r) \overset{\Psi_{B_\ell}}{\sim_{\delta}} \hat\eta_{B_\ell^c} \Big) 
\, \mathrm{d}(\lambda \otimes \mathbb{Q})(x, r)
		= \int_C \mathbb{P}\Big( x \xleftrightarrow{\Gamma_{\ell}^{x, \eta}} \eta_{B_\ell^c} \Big) 
\, \mathrm{d}\lambda(x) .
\end{align*}
\end{lemma}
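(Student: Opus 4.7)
The plan is to reduce to the finite-boundary case of Lemma~\ref{lemma 3 approx of RCM} by truncating $\eta_{B_\ell^c}$ to $\eta_{B_n \setminus B_\ell}$ for large $n$, and then to control the truncation error uniformly in $\delta$. Write $B := B_\ell$. Since $\eta$ is simple (the reference measure is diffuse) with $\lambda(B_n) < \infty$, the restriction $\eta_{B_n \setminus B}$ almost surely lies in $\mathbf{N}_{fs}(\BX) \cap \mathbf{N}_{B^c}(\BX)$. As $\eta$ is independent of $\Psi$ and of the uniforms driving the RCMs, I condition on $\eta = \mu$, apply Lemma~\ref{lemma 3 approx of RCM} with finite boundary $\mu_{B_n \setminus B}$, and combine with Fubini and dominated convergence (integrands are bounded by $1$, and $\lambda(C) < \infty$) to obtain, for each fixed $n > \ell$,
\begin{align*}
  \lim_{\delta \downarrow 0} \int_{C \times \mathbb{M}}
  \mathbb{P}\Big( (x, r) \overset{\Psi_B}{\sim_\delta} \hat\eta_{B_n \setminus B} \Big)
  \,\mathrm{d}(\lambda \otimes \mathbb{Q})(x, r)
  = \int_C \mathbb{P}\Big( x \xleftrightarrow{\Gamma^{x, \eta}_{\ell, n}} \eta_{B_n \setminus B} \Big) \,\mathrm{d}\lambda(x),
\end{align*}
where $\Gamma^{x, \eta}_{\ell, n}$ denotes the RCM based on $\bar\Psi_B + \eta_{B_n \setminus B} + \delta_x$.

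Next, I estimate both truncation errors uniformly in $\delta$. On the $\sim_\delta$-side the symmetric difference of events is contained in $\{ (x,r) \overset{\Psi_B}{\sim_\delta} \hat\eta_{B_n^c} \}$. On the RCM-side, truncating any connecting path at its first entry into $\eta_{B^c}$ shows that the symmetric difference is contained in the event that some edge of $\Gamma_\ell^{x,\eta}$ goes from the cluster $W(x)$ of $x$ in the RCM restricted to $\bar\Psi_B \cup \{x\}$ into $\eta_{B_n^c}$. In both cases, a union bound over the last edge dominates the error by
\begin{align*}
  \mathbb{E}\bigg[ \sum_{v \in W(x)} \sum_{z \in \eta_{B_n^c}} \varphi(v, z) \bigg] ,
\end{align*}
where $W(x) \subset \bar\Psi_B \cup \{x\}$ is almost surely finite with a law that does not depend on $n$. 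Since $\eta$ is independent of $W(x)$, the GNZ equation applied to $\eta$ (using $\kappa \leq 1$, which is where the non-negativity of $v$ enters via $\mathbb{E}[\eta(A)] \leq \lambda(A)$) combined with Campbell's theorem for $\bar\Psi_B$ bounds the error, after integration over $x \in C$, by
\begin{align*}
  \int_C \int_{B_n^c} \varphi(x, y) \,\mathrm{d}\lambda(y) \,\mathrm{d}\lambda(x)
  + \lambda(C) \int_B \int_{B_n^c} \varphi(z, y) \,\mathrm{d}\lambda(y) \,\mathrm{d}\lambda(z) .
\end{align*}
Both summands vanish as $n \to \infty$ by monotone convergence since $C, B \subset B_\ell$ and \eqref{eq. 3.1} is in force. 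A standard three-epsilon argument combining this with the previous step then yields the claim.

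The main obstacle is the uniform-in-$\delta$ control of the truncation error. A naive bound via the full cluster of $x$ in $\Gamma_\ell^{x,\eta}$ fails because that cluster may contain many points of $\eta_{B_n \setminus B}$ whose count grows with $n$, giving rise to a term of the form $\int_{B_n \setminus B} \int_{B_n^c} \varphi \,\mathrm{d}\lambda \,\mathrm{d}\lambda$ that is not controlled by \eqref{eq. 3.1}. The key observation is that it suffices to work with the cluster restricted to $\bar\Psi_B \cup \{x\}$, obtained by truncating at the first entry into $\eta_{B^c}$; this restricted cluster has law independent of $n$, and the resulting integrals reduce to pairings of points in $B_\ell$ with points outside $B_n$, which is precisely what the bound \eqref{eq. 3.1} controls.
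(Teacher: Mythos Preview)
Your proposal is correct and follows the same three-term truncation strategy as the paper: reduce to the finite-boundary case via Lemma~\ref{lemma 3 approx of RCM} on $\eta_{B_n\setminus B_\ell}$, and control both truncation errors uniformly in $\delta$ by the quantity $\int_C\int_{B_n^c}\varphi\,\mathrm{d}\lambda\,\mathrm{d}\lambda + \lambda(C)\int_{B_\ell}\int_{B_n^c}\varphi\,\mathrm{d}\lambda\,\mathrm{d}\lambda$, which vanishes by~\eqref{eq. 3.1}. One small imprecision: you invoke the RCM cluster $W(x)$ for \emph{both} error terms, but on the $\sim_\delta$-side the relevant object is the $\sim_\delta$-cluster of $(x,r)$ in $\Psi_{B_\ell}$, which is not the same random set as $W(x)$; however, since you immediately pass to the crude superset $\bar\Psi_{B_\ell}\cup\{x\}$ (exactly as the paper does), this does not affect the final bound.
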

\begin{proof}
For $n \in \N$ with $n > \ell$ we write $\Gamma_{\ell, n}^{x, \eta}$
for the restriction of $\Gamma_{\ell}^{x, \eta}$ onto $B_n$, meaning
that only vertices inside $B_n$ and their connections among each
other remain. For such $n$, we have

\begin{align} \label{eq lemma proof 2}
  \bigg| \int_{C \times \mathbb{M}} 
&\mathbb{P}\Big( (x, r) \overset{\Psi_{B_\ell}}{\sim_{\delta}} \hat\eta_{B_\ell^c} \Big)
\,\mathrm{d}(\lambda \otimes \mathbb{Q})(x, r) - 
\int_C \mathbb{P}\Big( x \xleftrightarrow{\Gamma_{\ell}^{x, \eta}} \eta_{B_\ell^c} \Big) 
\, \mathrm{d}\lambda(x) \bigg| \nonumber \\
&\leq \int_{C \times \mathbb{M}} \Big| \mathbb{P}\Big( (x, r) \overset{\Psi_{B_\ell}}{\sim_{\delta}} 
\hat\eta_{B_\ell^c} \Big) - \mathbb{P}\Big( (x, r) \overset{\Psi_{B_\ell}}{\sim_{\delta}} 
\hat\eta_{B_n \setminus B_\ell} \Big) \Big| \, \mathrm{d}(\lambda \otimes \mathbb{Q})(x, r) \nonumber \\
&\quad+ \bigg| \int_{C \times \mathbb{M}} 
\mathbb{P}\Big((x,r)\overset{\Psi_{B_\ell}}{\sim_{\delta}} \hat\eta_{B_n \setminus B_\ell} \Big) 
\, \mathrm{d}(\lambda \otimes \mathbb{Q})(x, r) 
-\int_C \mathbb{P}\Big( x \xleftrightarrow{\Gamma_{\ell, n}^{x, \eta}} \eta_{B_n \setminus B_\ell} \Big) 
\, \mathrm{d}\lambda(x) \bigg| \nonumber \\
&\quad+ \int_C \Big|\mathbb{P}\Big(\xleftrightarrow{\Gamma_{\ell,n}^{x, \eta}} 
\eta_{B_n\setminus B_\ell} \Big) -\mathbb{P}\Big(x \xleftrightarrow{\Gamma_{\ell}^{x,\eta}} \eta_{B_\ell^c} \Big)
                                      \Big| \, \mathrm{d}\lambda(x) .
	\end{align}
	
We consider the three terms that appear in \eqref{eq lemma proof 2} separately. 
Let $\delta>0$.
As for the first term, note that
\begin{align*}
\int_{C \times \mathbb{M}} &\Big|\mathbb{P}\Big((x,r)\overset{\Psi_{B_\ell}}{\sim_{\delta}}\hat\eta_{B_\ell^c}\Big) 
- \mathbb{P}\Big( (x, r) \overset{\Psi_{B_\ell}}{\sim_{\delta}} \hat\eta_{B_n \setminus B_\ell} \Big) 
\Big| \, \mathrm{d}(\lambda \otimes \mathbb{Q})(x,r) \\
&\leq \int_{C \times \mathbb{M}} \mathbb{E} 
\Big| \I\Big\{ (x, r) \overset{\Psi_{B_\ell}}{\sim_{\delta}} \hat\eta_{B_\ell^c} \Big\} 
- \I\Big\{ (x, r) \overset{\Psi_{B_\ell}}{\sim_{\delta}} \hat\eta_{B_n \setminus B_\ell} \Big\} \Big| 
\, \mathrm{d}(\lambda \otimes \mathbb{Q})(x, r) \\
&\le\int_{C \times \mathbb{M}} 
\mathbb{E}\Big[\I\Big\{(x,r)\overset{\Psi_{B_\ell}}{\sim_{\delta}}\hat\eta_{B_n^c} \Big\} \Big] 
\, \mathrm{d}(\lambda \otimes \mathbb{Q})(x,r).
	\end{align*}
If $(x, r) \in C \times \mathbb{M}$ is $\sim_\delta$-connected via $\Psi_{B_\ell}$
to $\hat\eta_{B_n^c}$, then either $(x, r) \sim_\delta (y, s)$
for some point $y \in \eta_{B_n^c}$ or one of the Poisson
points is connected to $\hat\eta_{B_n^c}$. Thus, the previous
term is bounded by
\begin{align*}
\int_{C \times \mathbb{M}} &\mathbb{E}\bigg[ \int_{B_n^c} \I\big\{ (x, r) \sim_\delta (y, s) \big\} 
\, \mathrm{d}\eta(y) \bigg] \mathrm{d}(\lambda \otimes \mathbb{Q})(x, r) \\
&+ \lambda(C) \cdot \mathbb{E}\bigg[ \int_{B_\ell \times \mathbb{M}} 
\int_{B_n^c} \I\big\{ (z, t) \sim_\delta (y, s) \big\} \, \mathrm{d}\eta(y) \, \mathrm{d}\Psi(z, t) \bigg].
	\end{align*}
Using that $\eta$ is independent of $\Psi$ and that the
PI $\kappa$ is bounded by $1$, the above is bounded by
\begin{align*}
\int_{C \times \mathbb{M}} &\int_{B_n^c} \I\big\{ (x, r) \sim_\delta (y, s) \big\} \, \mathrm{d}\lambda(y) 
\, \mathrm{d}(\lambda \otimes \mathbb{Q})(x, r) \\
&+ \lambda(C) \int_{B_\ell \times \mathbb{M}} \int_{B_n^c} \I\big\{ (z, t) \sim_\delta (y, s) \big\} 
\, \mathrm{d}\lambda(y) \, \mathrm{d}(\lambda \otimes \mathbb{Q})(z,t).
	\end{align*}
As $B_\ell \subset B_n$, the points in $B_\ell$ are always
smaller (w.r.t. $\prec$) than the points in $B_n^c$, so by
construction of $\sim_\delta$ (and $\mathbb{Q}$), the latter
sum equals
\begin{align*}
\int_C \int_{B_n^c} \varphi(x, y) \, \mathrm{d}\lambda(y) \, \mathrm{d}\lambda(x) 
+ \lambda(C) \int_{B_\ell} \int_{B_n^c} \varphi(z, y) \, \mathrm{d}\lambda(y) \, \mathrm{d}\lambda(z) ,
\end{align*}
which converges to $0$ as $n \to \infty$ by dominated convergence,
using \eqref{eq. 3.1}. Hence, the first term on the right hand side of
\eqref{eq lemma proof 2} converges to $0$ as $n \to \infty$ uniformly
in $\delta$. 

The second term in \eqref{eq lemma proof 2} converges to
$0$ as $\delta \downarrow 0$ (for each fixed $n > \ell$) by the
independence of $\eta$ and $\Psi$, dominated convergence, and
Lemma \ref{lemma 3 approx of RCM}. 

As for the third term on the right
hand side of \eqref{eq lemma proof 2}, note that
\begin{align*}
\int_C \Big| \mathbb{P}\Big( x \xleftrightarrow{\Gamma_{\ell, n}^{x, \eta}} \eta_{B_n \setminus B_\ell}\Big) 
-\mathbb{P}\Big( x \xleftrightarrow{\Gamma_{\ell}^{x,\eta}} \eta_{B_\ell^c}\Big)\Big| \, \mathrm{d}\lambda(x)
\leq \int_C\mathbb{E}\Big|\I\Big\{x\xleftrightarrow{\Gamma_{\ell, n}^{x,\eta}}\eta_{B_n \setminus B_\ell} \Big\} 
-\I\Big\{ x \xleftrightarrow{\Gamma_{\ell}^{x, \eta}} \eta_{B_\ell^c} \Big\} \Big| \, \mathrm{d}\lambda(x).
\end{align*}
The difference of the indicator functions appearing in the
expectation can only be distinct from $0$ if there exists a
connection from $x \in C$ to $\eta_{B_n^c}$ (via $\Gamma_{\ell}^{x, \eta}$) which uses no point in
$\eta_{B_n \setminus B_\ell}$. Thus, either $x$ or one of the
Poisson points in $B_\ell$ has to be connected (directly) to
one of the points in $\eta_{B_n^c}$. Together with the given
independence properties, the quantity is therefore further
bounded by
\begin{align*}
\int_C \mathbb{E}\bigg[ &\int_{B_n^c} \varphi(x, y) \, \mathrm{d}\eta(y) \bigg]\mathrm{d}\lambda(x) 
+ \int_C \mathbb{E}\bigg[ \int_{B_\ell} \int_{B_n^c} \varphi(z, y) \, \mathrm{d}\eta(y) 
\,\mathrm{d}\bar\Psi(z) \bigg] \mathrm{d}\lambda(x) \\
&\le \int_C \int_{B_n^c} \varphi(x, y) \, \mathrm{d}\lambda(y) \, \mathrm{d}\lambda(x) 
+ \lambda(C) \int_{B_\ell} \int_{B_n^c} \varphi(z, y) \, \mathrm{d}\lambda(y) \, \mathrm{d}\lambda(z).
\end{align*}
By choice of $C$ and $B_\ell$, referring to \eqref{eq. 3.1},
dominated convergence implies that the above term converges to
$0$ as $n \to \infty$. Summarizing, we see that the left hand side
of \eqref{eq lemma proof 2} tends to zero as $\delta\downarrow 0$.
\end{proof}

Before we prove our main result, we need to investigate the
behavior of the RCM in the subcritical regime. More specifically, we
need to establish that the probability of a point $x$ being connected
to a (pair potential-) Gibbs process point in $B_\ell^c$ via the RCM
based on a Poisson process on $B_\ell$ goes to $0$ as
$\ell \to \infty$.

\begin{lemma} \label{lemma 5 behavior in subcrit. regime} Assume that
$(v, \lambda)$ is subcritical. Let $\Phi$ be a Poisson process on
$\BX$ with intensity measure $\lambda$. Suppose that $\eta$ is a Gibbs process with
PI $\kappa$ and reference measure $\lambda$, such that $\eta$ is
independent of $\Phi$ and independent of the double sequence that is
used to construct the RCMs. For each $x\in\mathbb{X}$ let $\Gamma_\ell^{x, \eta}$ 
be a RCM with connection function $\varphi = 1 - e^{-v}$ and vertex set
  $\Phi_{B_\ell} + \eta_{B_\ell^c} + \delta_x$. Then,
\begin{align*}
\lim_{\ell \to \infty} \mathbb{P}\Big( x \xleftrightarrow{\Gamma_\ell^{x, \eta}} \eta_{B_\ell^c} \Big)
=0,\quad \lambda\text{-a.e.\ $x\in\BX$}.
	\end{align*}
\end{lemma}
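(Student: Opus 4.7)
My plan is to dominate the cluster of $x$ in $\Gamma_\ell^{x,\eta}$ by the cluster in a purely Poisson-driven RCM, so that the subcriticality assumption can be applied directly. The crucial observation is that $v\ge 0$ forces the Papangelou intensity $\kappa$ to satisfy $\kappa\le 1$, which is precisely the condition known to imply stochastic domination of $\eta$ by the Poisson process $\Pi_\lambda$.

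The first step is to invoke a classical stochastic comparison result for Gibbs processes (in the spirit of Georgii--K\"uneth): since $\kappa\le 1$, the law of $\eta$ is dominated by that of $\Pi_\lambda$. By Strassen's theorem, combined with an independent auxiliary randomization, I enlarge the underlying probability space to carry a Poisson process $\Phi'$ on $\BX$ of intensity $\lambda$ that is independent of $\Phi$ and of the edge randomness, and satisfies $\eta\le \Phi'$ almost surely (as measures, hence as point sets since the reference measure is diffuse).

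Next, I set $\tilde\Phi:=\Phi_{B_\ell}+\Phi'_{B_\ell^c}$. By independence of $\Phi$ and $\Phi'$, $\tilde\Phi$ is a Poisson process on $\BX$ with intensity $\lambda$. Using the same edge randomness, realised as a joint uniform field indexed by unordered pairs of points, I build the RCM $\tilde\Gamma^x$ with connection function $\varphi$ on $\tilde\Phi+\delta_x$, and denote by $\tilde C_x$ the cluster of $x$ therein. The vertex set $\Phi_{B_\ell}+\eta_{B_\ell^c}+\delta_x$ of $\Gamma_\ell^{x,\eta}$ is dominated pointwise by $\tilde\Phi+\delta_x$, and with shared edge variables every edge of $\Gamma_\ell^{x,\eta}$ is also an edge of $\tilde\Gamma^x$. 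Consequently, the cluster of $x$ in $\Gamma_\ell^{x,\eta}$ is contained in $\tilde C_x$, so that
\begin{align*}
\Big\{x \xleftrightarrow{\Gamma_\ell^{x,\eta}} \eta_{B_\ell^c}\Big\}
\subseteq \big\{\tilde C_x(B_\ell^c)\ge 1\big\}.
\end{align*}

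To finish, observe that $\tilde\Phi$ has the same law as the process $\Phi$ from Section \ref{subrcm}, so subcriticality of $(v,\lambda)$ yields $\tilde C_x(\BX)<\infty$ almost surely for $\lambda$-a.e.\ $x$. Since $B_\ell\uparrow\BX$ and $\tilde C_x$ has only finitely many points, $\mathbb{P}\big(\tilde C_x(B_\ell^c)\ge 1\big)\to 0$ as $\ell\to\infty$, which is the claim. The main obstacle I foresee is the first step: establishing the coupling $\eta\le \Phi'$ on the general Borel space $\BX$ with the additional requirement that $\Phi'$ be independent of $\Phi$ and of the edge randomness. The stochastic domination $\kappa\le 1\Rightarrow \eta\preceq \Pi_\lambda$ itself is classical, but promoting it to a joint coupling with the prescribed independence structure in the present generality requires a careful setup on an enlarged probability space.
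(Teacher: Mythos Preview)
Your argument is essentially identical to the paper's: dominate $\eta$ by an independent Poisson process $\Phi'$ via $\kappa\le 1$ (the paper cites Georgii--K\"uneth and \cite{LastOtto21} for exactly the coupling you flag as the main obstacle), observe that $\Phi_{B_\ell}+\Phi'_{B_\ell^c}$ is Poisson$(\lambda)$, and then bound by $\mathbb{P}(C_x(B_\ell^c)>0)$ for the fixed Poisson RCM. The only imprecision is that your $\tilde C_x$ depends on $\ell$ through $\tilde\Phi$, so the final limit should be phrased via the distributional identity $\mathbb{P}(\tilde C_x(B_\ell^c)\ge 1)=\mathbb{P}(C_x(B_\ell^c)\ge 1)$ rather than as an almost-sure statement about a single $\tilde C_x$; this is exactly how the paper handles it.
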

\begin{proof}
First of all, observe that since $\kappa \leq 1$, we can assume
without loss that $\eta \leq \Phi'$ almost surely, where $\Phi'$ is
a Poisson process with intensity measure $\lambda$ independent of
$\Phi$ and independent of the double sequence used for the RCM. This follows from an extension of Example 2.1 in \cite{GeorKun97} to unbounded $\lambda$, explicitly using that $\BX$ is a complete separable metric space (comparable to Lemma 5.3 in \cite{LastOtto21}). Thus, for each $x\in\mathbb{X}$ and each 
$\ell \in \N$, we have that
	\begin{align*}
		\mathbb{P}\Big( x \xleftrightarrow{\Gamma_\ell^{x, \eta}} \eta_{B_\ell^c} \Big)
		\leq \mathbb{P}\Big( x \xleftrightarrow{\Gamma_\ell^{x, \Phi'}} \Phi'_{B_\ell^c} \Big) ,
	\end{align*}
where $\Gamma_\ell^{x, \Phi'}$ denotes the RCM with connection function $\varphi$ based on
$\Phi_{B_\ell} + \Phi'_{B_\ell^c} + \delta_x$. Since the two Poisson processes are independent,
$\Phi_{B_\ell} + \Phi'_{B_\ell^c}$ is (for every $\ell \in \N$) a Poisson process on $\BX$ with intensity
measure $\lambda_{B_\ell} + \lambda_{B_\ell^c} = \lambda$. Therefore
(again using the independence), we can replace
$\Phi'_{B_\ell^c}$ by $\Phi_{B_\ell^c}$ and
$\Gamma_\ell^{x, \Phi'}$ by $\Gamma^x$ (the RCM based on
$\Phi + \delta_x$) in the above probability, which yields
\begin{align*}
\mathbb{P}\Big( x \xleftrightarrow{\Gamma_\ell^{x, \eta}} \eta_{B_\ell^c} \Big)
\leq \mathbb{P}\big( x \xleftrightarrow{\Gamma^{x}} \Phi_{B_\ell^c} \big) .
\end{align*} 
However, if $x$ is connected via $\Gamma^x$ to $\Phi_{B_\ell^c}$, then
the cluster of $x$ in $\Gamma^x$ has at least one point in
$B_\ell^c$. Thus, the probability in question is bounded by
\begin{align*}
		\mathbb{P}\big( C_x(B_\ell^c) > 0 \big) .
\end{align*}
Since $(v, \lambda)$ is assumed to be subcritical the latter probability tends to
$0$ as $\ell\to\infty$ for $\lambda$-a.e.\ $x\in\mathbb{X}$.
\end{proof}

We proceed to prove our main result. Though the technical details harnessed by the previous lemmata as well as our formal description differ from the proofs of existing uniqueness results, the last steps taken in the following proof retain a conceptual similarity to equation (4.4) of \cite{BergMaes94}, where the disagreement coupling was originally introduced for the discrete setting.

\begin{proof}[Proof of Theorem \ref{t2}]
Fix $\ell \in \mathbb{N}$ and let $\Psi$ be a Poisson
process on $\mathbb{X} \times \mathbb{M}$
with intensity measure $\lambda \otimes \mathbb{Q}$.
For each  $\psi,\psi' \in \mathbf{N}_{B_\ell^c \times \mathbb{M}}(\mathbb{X} \times \mathbb{M})$ and every
$\delta > 0$, Theorem 6.3 of \cite{LastOtto21} provides us
with a Gibbs process $\xi_\delta$ on $\mathbb{X} \times \mathbb{M}$ with PI
$\kappa_\delta^{(B_\ell \times \mathbb{M}, \psi)}$ and a Gibbs process $\xi_\delta'$ in
$\mathbb{X} \times \mathbb{M}$ with PI
$\kappa_\delta^{(B_\ell \times \mathbb{M}, \psi')}$
such that $\xi_\delta \leq \Psi$ and
$\xi_\delta' \leq \Psi$ almost surely, and such that
each point in $|\xi_\delta - \xi_\delta'|$ 
is $\sim_\delta$-connected via $\xi_\delta + \xi_\delta'$
to some point in $\psi+\psi'$. Hereby $|\nu|$ denotes the total variation measure of a signed measure $\nu$ on $\BX \times \mathbb{M}$, so, in the case of two counting measures $\nu, \nu' \in \mathbf{N}_{fs}(\BX \times \mathbb{M})$, the measure $|\nu - \nu'| \in \mathbf{N}_{fs}(\BX \times \mathbb{M})$ comprises those points in which $\nu$ and $\nu'$ differ.

For any $E \in \mathcal{N}_C(\mathbb{X})$ with $C \in \mathcal{X}$ and $C \subset B_\ell$, we obtain
\begin{align*}
  \big|\mathrm{P}^\delta_{B_\ell \times \mathbb{M},\psi}
  &\big( \big\{\nu \in \mathbf{N}(\mathbb{X} \times \mathbb{M}) : \bar\nu\in E \big\} \big) 
    - \mathrm{P}^\delta_{B_\ell \times \mathbb{M},\psi'}
\big( \big\{\nu \in \mathbf{N}(\mathbb{X} \times \mathbb{M}) : \bar\nu \in E \big\} \big) \big| \\
  &= \big| \mathbb{P}\big( \bar\xi_\delta \in E \big) - \mathbb{P}\big(\bar\xi'_\delta \in E \big) \big| \\
  &\leq \max\big\{ \mathbb{P}\big( \bar\xi_\delta \in E, \, \bar\xi'_\delta \notin E \big) , ~ 
\mathbb{P}\big(\bar\xi_\delta \notin E, \, \bar\xi'_\delta \in E \big) \big\} .
	\end{align*}
Since $E \in \mathcal{N}_C(\mathbb{X})$, the events in the
probability measures on the right hand side can only occur if
the restrictions of $\bar\xi_\delta$ and $\bar\xi'_\delta$ onto $C$
differ, so the term is bounded by
	\begin{align*}
\mathbb{P}\big( (\bar\xi_\delta)_C \neq (\bar\xi'_\delta)_C \big)
\leq \mathbb{P}\big( |\xi_\delta-\xi'_\delta|(C \times \mathbb{M}) > 0 \big) .
	\end{align*} 
Since each point in $|\xi_\delta - \xi_\delta'| \leq \Psi$ is
($\sim_\delta$-)connected via $\Psi_{B_\ell}$ to some
point in $\psi + \psi'$, a further bound is given through
\begin{align*}
\mathbb{E}\bigg[ \int_{C \times \mathbb{M}} &\I\Big\{ (x, r) \overset{\Psi_{B_\ell}}{\sim_{\delta}} 
(\psi + \psi') \Big\} \,\mathrm{d}\Psi(x, r) \bigg] \\
&= \mathbb{E}\bigg[ \int_{C \times \mathbb{M}} \I\Big\{ C_\delta(y,t,\Psi_{B_\ell})
\big( \{(x,r) \} \big) > 0 \text{ for some } (y, t) \in (\psi +\psi') \Big\} 
\,\mathrm{d}\Psi(x, r) \bigg] .
\end{align*}
By Mecke's equation, \cite[Theorem 4.1]{LastPenrose17}, this last term equals
\begin{align*}
\int_{C \times \mathbb{M}} &\mathbb{P} \Big( C_\delta\big(y, t,\Psi_{B_\ell} 
+ \delta_{(x, r)} \big)\big( \{ (x, r) \} \big) > 0 \text{ for some } (y,t) \in (\psi+\psi') \Big) 
\,\mathrm{d}(\lambda \otimes \mathbb{Q})(x, r) \\
&= \int_{C \times \mathbb{M}} \mathbb{P}\Big( (x, r) \overset{\Psi_{B_\ell}}{\sim_{\delta}} 
(\psi+\psi') \Big) \, \mathrm{d}(\lambda \otimes \mathbb{Q})(x, r).
\end{align*}

Now, let $\eta$ and $\eta'$ be two Gibbs processes on $\mathbb{X}$
with PI $\kappa$ and reference measure $\lambda$. Assume, without loss
of generality, that $(\eta, \eta')$ is independent of $\Psi$ and
independent of the double sequence used to define the RCMs. 
Let $C \in \mathcal{X}_b^*$ be arbitrary and choose $\ell$ large enough so that
$C \subset B_\ell$. Take $E \in \mathcal{N}_C(\mathbb{X})$. By the
DLR-equation \eqref{edlr}, Lemma \ref{lemma 2}, and the above bound, we obtain
\begin{align*}
\big| \mathbb{P}^{\eta}(E) - \mathbb{P}^{\eta'}(E) \big|
&= \Big| \mathbb{E}\Big[ \mathrm{P}_{B_\ell, \eta_{B^c_\ell}}(E) \Big] 
- \mathbb{E}\Big[ \mathrm{P}_{B_\ell, \eta'_{B_\ell^c}}(E) \Big] \Big| \\
		&\leq \mathbb{E} \Big| \mathrm{P}_{B_\ell, \eta_{B_\ell^c}}(E) - 
\mathrm{P}_{B_\ell, \eta'_{B_\ell^c}}(E) \Big| \\
&= \lim_{\delta \downarrow 0} \mathbb{E} 
\Big| \mathrm{P}^\delta_{B_\ell \times \mathbb{M}, \hat\eta_{B_\ell^c}}
\big(\big\{\nu \in \mathbf{N}(\mathbb{X} \times \mathbb{M}) : \bar\nu\in E \big\} \big) -\mathrm{P}^\delta_{B_\ell \times \mathbb{M}, \hat\eta'_{B_\ell^c}}
\big( \big\{\nu \in \mathbf{N}(\mathbb{X} \times \mathbb{M}) : \bar\nu\in E \big\} \big) \Big| \\
		&\leq \limsup_{\delta \downarrow 0} \int_{C \times \mathbb{M}} 
\mathbb{P}\Big( (x, r) \overset{\Psi_{B_\ell}}{\sim_{\delta}} (\hat\eta_{B_\ell^c} + \hat\eta'_{B_\ell^c}) \Big) 
\, \mathrm{d}(\lambda \otimes \mathbb{Q})(x, r) \\
&\leq \limsup_{\delta \downarrow 0} \int_{C \times \mathbb{M}} 
\mathbb{P}\Big( (x, r) \overset{\Psi_{B_\ell}}{\sim_{\delta}} \hat\eta_{B_\ell^c} \Big) 
\, \mathrm{d}(\lambda \otimes \mathbb{Q})(x, r) \\
		&\quad+ \limsup_{\delta \downarrow 0} \int_{C \times \mathbb{M}} 
\mathbb{P}\Big( (x, r) \overset{\Psi_{B_\ell}}{\sim_{\delta}} \hat\eta'_{B_\ell^c} \Big) 
\, \mathrm{d}(\lambda \otimes \mathbb{Q})(x, r) .
\end{align*}
Applying Lemma \ref{lemma 4 approx of RCM unbounded} and
dominated convergence to each of the two terms, we arrive at
\begin{align} \label{eq 1 proof theorem}
		\big| \mathbb{P}^{\eta}(E) - \mathbb{P}^{\eta'}(E) \big|
		\leq \int_{C} \mathbb{P}\Big( x \xleftrightarrow{\Gamma_{\ell}^{x, \eta}} \eta_{B_\ell^c} \Big) \, \mathrm{d}\lambda(x) + \int_{C} \mathbb{P}\Big( x \xleftrightarrow{\Gamma_{\ell}^{x, \eta'}} \eta'_{B_\ell^c} \Big) \, \mathrm{d}\lambda(x) .
	\end{align}
	Using Lemma \ref{lemma 5 behavior in subcrit. regime} and
        dominated convergences (for $\ell \to \infty$) twice, the
        right hand side of \eqref{eq 1 proof theorem} is seen to
        converge to $0$. As $C \in \mathcal{X}_b^*$ and
        $E \in \mathcal{N}_C$ were arbitrary, the measures
        $\mathbb{P}^{\eta}$ and $\mathbb{P}^{\eta'}$ agree on the
        algebra $\mathcal{Z}$ which generates $\mathcal{N}$. Hence,
        $\mathbb{P}^{\eta} \equiv \mathbb{P}^{\eta'}$ and the proof is
        complete.
\end{proof}

\section{Comments and examples}

In this section we first work in the general setting
of the introduction, that is we fix a complete separable metric space $(\BX,d)$
equipped with a locally finite measure $\lambda$, and let $v$ be a non-negative
pair potential. The next result is an immediate consequence of
Corollary \ref{c1}.

\begin{corollary}\label{c4.1} Assume that
\begin{align*}
\esssup_{x\in\BX}\int_{\mathbb{X}} \big(1-e^{-v(x,y)}\big)\,\mathrm{d}\lambda(y) < \infty.
\end{align*}
Then $|\mathcal{G}(v, \gamma\lambda)|=1$ for all sufficiently small $\gamma\ge 0$.
\end{corollary}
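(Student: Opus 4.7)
The plan is to derive the statement directly from Corollary \ref{c1}, which is the only tool needed. I would first set $M := \esssup_{x\in\BX}\int_{\mathbb{X}} \big(1-e^{-v(x,y)}\big)\,\mathrm{d}\lambda(y)$, which is finite by hypothesis. With $\varphi = 1 - e^{-v}$ (as in the statement of Corollary \ref{c1}) the essential supremum appearing in \eqref{ebrlow2} is exactly $M$, so the condition of Corollary \ref{c1} reads $\gamma \cdot M < 1$.

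From here the argument is immediate. If $M = 0$, then $\gamma \cdot M = 0 < 1$ holds trivially for every $\gamma \ge 0$, and Corollary \ref{c1} yields $|\mathcal{G}(v,\gamma\lambda)| = 1$ for all $\gamma \ge 0$. If $M > 0$, then any $\gamma \in [0, 1/M)$ satisfies \eqref{ebrlow2}, and Corollary \ref{c1} again gives uniqueness. In either case, uniqueness holds for all sufficiently small $\gamma \ge 0$, completing the proof.

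Since the derivation is a one-line unpacking of definitions, there is no real obstacle; the only thing worth double-checking is that scaling the reference measure $\lambda \mapsto \gamma\lambda$ in \eqref{ebrlow2} produces precisely the prefactor $\gamma$ in front of $M$, which is immediate from the linearity of the inner integral in the reference measure (the connection function $\varphi = 1 - e^{-v}$ itself is unaffected by the scaling).
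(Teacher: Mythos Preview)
Your proof is correct and matches the paper's approach exactly: the paper simply states that Corollary~\ref{c4.1} is ``an immediate consequence of Corollary~\ref{c1},'' which is precisely what you unpack.
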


\begin{corollary}\label{c4.3} Assume that
\begin{align*}
\esssup_{x\in\BX}\int_{\mathbb{X}} v(x,y)\,\mathrm{d}\lambda(y) < \infty.
\end{align*}
Then $|\mathcal{G}(\beta v, \lambda)|=1$ for all sufficiently small $\beta\ge 0$.
\end{corollary}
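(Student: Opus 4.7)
The plan is to deduce Corollary \ref{c4.3} directly from Corollary \ref{c1} by applying it to the rescaled pair potential $\beta v$ with reference measure $\lambda$ (i.e., taking $\gamma = 1$ in the notation of Corollary \ref{c1}), and by using the elementary bound $1 - e^{-t} \le t$ valid for $t \ge 0$ to convert the linear integrability assumption on $v$ into the exponential integrability condition required by \eqref{ebrlow2}.

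Concretely, I would set $M := \esssup_{x\in\BX}\int_{\BX} v(x,y)\,\mathrm{d}\lambda(y)$, which is finite by hypothesis. Writing $\varphi_\beta := 1 - e^{-\beta v}$ for the connection function associated with the pair potential $\beta v$, the inequality $1 - e^{-\beta v(x,y)} \le \beta v(x,y)$ gives
\begin{align*}
\esssup_{x\in\BX}\int_{\BX} \varphi_\beta(x,y)\,\mathrm{d}\lambda(y)
\le \beta\,\esssup_{x\in\BX}\int_{\BX} v(x,y)\,\mathrm{d}\lambda(y)
= \beta M.
\end{align*}
Hence for every $\beta < 1/M$ (with $1/M := \infty$ if $M = 0$), condition \eqref{ebrlow2} is satisfied for the pair potential $\beta v$ with $\gamma = 1$, and Corollary \ref{c1} yields $|\mathcal{G}(\beta v,\lambda)| = 1$. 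The case $\beta = 0$ is trivial, since then $\kappa \equiv 1$ and the unique element of $\mathcal{G}(0,\lambda)$ is the Poisson process $\Pi_\lambda$.

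There is essentially no obstacle: the argument is a one-line application of $1 - e^{-t} \le t$, and all the heavy lifting has already been done in Theorem \ref{t2} and Corollary \ref{c1}. The only point worth mentioning is that Corollary \ref{c1} is invoked with the \emph{rescaled potential} $\beta v$ rather than with a rescaled intensity, which is legitimate because the statement of Corollary \ref{c1} holds for an arbitrary non-negative pair potential, and the combination $(\beta v, \lambda)$ fits into that framework just as well as $(v,\gamma\lambda)$.
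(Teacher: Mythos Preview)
Your proof is correct and follows essentially the same approach as the paper: both use the elementary bound $1-e^{-\beta v}\le \beta v$ to feed into Corollary~\ref{c1}. Your version is in fact slightly cleaner, since the direct estimate $\esssup_x\int\varphi_\beta(x,y)\,\mathrm{d}\lambda(y)\le\beta M$ makes the threshold $\beta<1/M$ explicit and renders the paper's passing reference to dominated convergence unnecessary.
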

\begin{proof} Since $1-e^{-\beta v}\le \beta v$ the result follows
from Corollary \ref{c1} and dominated convergence.
\end{proof}

The constant $\beta$ in Corollary \ref{c4.3} can be interpreted as {\em inverse temperature}.


\begin{example}\rm Suppose that $\BX$ equals the space $\mathcal{C}^d$
of all compact subsets of $\R^d$, equipped with the Hausdorff metric 
and a translation invariant locally finite measure $\lambda$, cf. \cite{SW08}. Let $V\colon \mathcal{C}^d\to[0,\infty]$ be measurable
with $V(\varnothing)=0$. For instance $V$ could be the volume or, if $\lambda$
is concentrated on the convex bodies, a linear combination
of the intrinsic volumes. Assume that the pair potential
is given by $v(K,L)=V(K\cap L)$, $K,L\in \mathcal{C}^d$.
As a percolation model,
the associated RCM with connection function $\varphi=1-e^{v}$
is considerably more general than the (Poisson driven) Boolean model,
studied (for spherical bodies), for instance, in \cite{MeesterRoy}.
The latter arises in the special case
$V(K)=\infty\cdot\I\{K\ne\varnothing\}$. Then the connection function
is given by $\varphi_\infty(K,L):=\I\{K\cap L\ne \varnothing\}$, so that
the connections do not involve any additional randomness. 
The corresponding Gibbs model are {\em hard particles in equilibrium},
while the case of a general $V$ could be addressed as
{\em soft particles in equilibrium}, at least if $V$ is translation invariant.

Theorem \ref{t2} requires $(\varphi,\lambda)$ to be subcritical,
while the previous results from \cite{HTHou17,BHLV20} (when specialized to
non-negative pair potentials) require the Boolean model
$(\varphi_\infty,\lambda)$ to be subcritical.
Since $\varphi(K,L)\leq \varphi_\infty(K,L)$ our result gives
better bounds on the uniqueness region. In particular,  
$\varphi(K,L)< \varphi_\infty(K,L)$ whenever $V(K\cap L)>0$.
If, for instance, $V$ is continuous at $\varnothing$,  
$\varphi(K,L)$ can be arbitrarily small, and still
$\varphi_\infty(K,L)=1$.
\end{example}

\begin{remark}\rm Since we can allow for a non-diffuse intensity
	measure, our results cover the case of a discrete graph $G=(V,E)$.
	We may then take $\BX=V$ and $\lambda=\gamma \lambda_0$, where
	$\lambda_0$ is the counting measure on $V$ and $\gamma>0$.
	A possible choice of a connection function is $\varphi(x,y):=p$
	if $\{x,y\}\in E$ and $\varphi(x,y)=0$, otherwise, where
	$p$ is a given probability. The resulting RCM
	is then a Poisson version of a {\em mixed percolation} model,
	see \cite{ChayesSch00}. However, $\varphi$ could also be {\em long-ranged}
	as in \cite{DeijfenHofstadHooghiemstra}. In fact, it easy to come up
	with a version of the model in \cite{DeijfenHofstadHooghiemstra}
	driven by a Poisson process on $\Z^d$. 
	In principle it might be possible to apply our uniqueness results
	to discrete models of statistical physics.
	We leave this for future research.
\end{remark}


\begin{remark}\rm It is believed (see e.g.\ \cite{Dereudre18}
and the references given there)
that in many Gibbs models there exist $\gamma^*>0$
such that $|\mathcal{G}(v,\gamma\lambda)|\ge 2$
provided that $\gamma>\gamma*$. We expect $\gamma_c$, as defined
by \eqref{criticalactivity}, to be (much) smaller than $\gamma^*$.
A careful analysis of our proofs suggests that a possible (but rather
implicit) approximation
of $\gamma^*$ is a critical intensity, which is defined in terms
of RCMs based on suitable finite volume versions of a Gibbs process
with  pair potential $v$, see e.g.\ \cite[Proposition 3.1]{GeoHaeg96}.
This is also supported by the discussion in \cite{GeorgiiLL05}.
\end{remark}

\section{Simulation results for the critical thresholds} \label{simulations}

Upon comparing the RCM with a branching process, our general Theorem \ref{t2} implies Corollary \ref{c1} which corresponds with the uniqueness result from \cite{HouZass21}. However, it is known from simulations that for the Gilbert graph, which corresponds to the hard sphere model, the branching bounds are widely off the actual critical intensity in lower dimensions. For an overview, we refer to \cite{Ziesche18}. Thus, we expect Theorem \ref{t2} to yield substantial improvements of Corollary \ref{c1} in low dimensions. To illustrate this point, we provide simulation results that give a rough overview on this difference in various models.

To approximate the true critical intensity of the RCM, we proceed as follows. For a given connection function in $\R^d$ with finite range $R > 0$, for instance one coming from a (finite range) pair potential, in a given dimension $d$ and for a given intensity $\gamma > 0$, we fix a large system size $S > R$. In the ball of radius $S$ around the origin we now construct the cluster of the origin in the RCM based on a stationary Poisson process with intensity $\gamma$ augmented by the origin. We start by simulating Poisson points (according to the given intensity) in the ball of radius $R$ around the origin, corresponding to all points which could possibly be connected to the origin and we check each of those points for such a connection (only to the origin). In the following, we keep track of three types of points, namely \textit{saturated} points which are part of the cluster and whose perspective has already been taken (which after the first step includes only the origin), those points which are part of the cluster but around which we might still have to simulate new Poisson points, and those Poisson points which are not yet connected to the cluster. Then we proceed algorithmically as follows. Of those cluster points from whose perspective we have not yet simulated we choose that point $x$ which is furthest away from the origin to take its perspective, meaning that we check if that particular point connects to any of the Poisson points which already exist but are not yet part of the cluster, and then proceed to simulate new Poisson points (according to the given intensity) in that part of $B(x, R)$ which was not yet covered in previous steps and we check if any of the new points connects to $x$. Note that we do not look for connections between two cluster points as we already know that both are part of the cluster and an additional connection between them does not change the size of the cluster. Also notice that when we first generate new points, we do not check for connections among them immediately (and only for connections to the center of the given step), but as soon as we take the perspective of any of the cluster points we check for connections to the points in its neighborhood and thus miss no relevant connection. The algorithm terminates as soon as all cluster points are saturated (and hence the construction of the cluster has died out within $B(0, S)$) or if the cluster connects to the complement of $B(0, S)$, meaning that some point in the cluster has a norm larger than $S$.
To make a decision whether the RCM percolates for a given intensity, we construct the cluster $5,000$ times. If the cluster connects to the complement of $B(0, S)$ a single time, we count that as percolation, even though, of course, a larger initial choice of the system size might have revealed that the cluster is actually finite. If the cluster lies within $B(0, S)$ in each of the $5,000$ runs, we count this as no percolation. To find a rough approximation of the critical intensity, we start at the branching lower bound, which is easily calculated for a given model, and increase the intensity by $10 \%$ as long as our algorithm decides that no percolation occurs. As soon as we first encounter percolation, we accept that intensity as an upper bound and the last intensity at which no percolation occurred as a lower bound. To refine the approximation further, we then slice the resulting interval by half two times, investigating the middle between the two bounds for percolation and adjusting the upper and lower bound accordingly. Note that the choices for $S$ and the number of runs where made according to our computational resources, larger values for both quantities will surely lead to better estimates, but one has to observe that there is no theoretical guarantee that the simulation will provide lower bounds for the critical intensities (see the discussion below). Note that a highly related algorithmic way for exploring a cluster is explained in Chapter 5.2 of \cite{JanLucRuc2000} for the discrete setting with finitely many vertices. 

To establish that the simulations yield plausible and useful results, we consider as a benchmark the hard sphere model with pair potential $v_R (x, y) = \infty \cdot \mathbf{1}\{ |x - y| \leq R \}$, $x, y \in \R^d$, which corresponds to the Gilbert graph with interaction range $R$, whose connection function is given through 
\begin{align*}
	\varphi_R(x, y) = 1 - \exp\big( - v(x, y) \big) = \mathbf{1}\big\{ |x - y| \leq R \big\} .
\end{align*}
The percolation properties of the Gilbert graph, in turn, are exactly those of the Boolean model with grains being balls with fixed radius $R/2$. The branching lower bound on the critical intensity is
\begin{align*}
	\bigg( \sup_{x \in \R^d} \int_{\R^d} \varphi_R(x, y) \, \mathrm{d}y \bigg)^{-1}
	= \frac{1}{V_d\big( B(0, R) \big)} .
\end{align*}
In the simulations, we fix $R = 2$ and compare the approximation of our method of simulation with the branching bounds and some of the best approximations for the critical intensity from the literature, namely the results from \cite{TorqJiao12}. Note that it is common in physics to investigate the percolation behavior in terms of the reduced number density, so we had to convert the values from \cite{TorqJiao12} into corresponding critical intensities by dividing with the volume of the unit ball in the given dimension. Throughout we round all values to five significant digits.

\begin{table*}[h] \label{Table 1}
	\centering
	\begin{tabular}{crclll} 
		\toprule
		{~~$d$~~} & \multicolumn{1}{c}{~$S$} & {~~Runs~} & \multicolumn{1}{c}{Approx. of $\gamma_c$} & \multicolumn{1}{c}{Branching bound} & \multicolumn{1}{c}{Torq./Jiao approximation \cite{TorqJiao12}} \\
		
		\cmidrule[0.4pt](r{0.125em}){1-1}
		\cmidrule[0.4pt](lr{0.125em}){2-2}
		\cmidrule[0.4pt](lr{0.125em}){3-3}
		\cmidrule[0.4pt](lr{0.125em}){4-4}
		\cmidrule[0.4pt](lr{0.125em}){5-5}
		\cmidrule[0.4pt](l{0.25em}){6-6}
		
		2  & ~1000 & ~5000 & \hspace{4mm} 0.34072 & \hspace{6mm} 0.079577 & \hspace{13mm} 0.35909 \\
		\myrowcolour
		3  & 500  & ~5000 & \hspace{4mm} 0.079338 & \hspace{6mm} 0.029842 & \hspace{13mm} 0.081621 \\
		4  & 300  & ~5000 & \hspace{4mm} 0.025915 & \hspace{6mm} 0.012665 & \hspace{13mm} 0.026435 \\
		\myrowcolour
		5  & 200  & ~5000 & \hspace{4mm} 0.010039 & \hspace{6mm} 0.0059368 & \hspace{13mm} 0.010342 \\
		\bottomrule
	\end{tabular}
	\caption{Gilbert graph}
\end{table*}

The comparison between our approximation and the lower bounds from \cite{TorqJiao12} (with the correction reported in \cite{TorqJiao14}), which are known to be very precise, show that our simulations, even with the very manageable choice of the simulation parameters, are reasonably well calibrated in that they provide conservative lower bounds for the critical intensity which are not wide off the mark and thus provide a solid reference for the order of magnitude of the critical intensity. The critical intensities of the Gilbert graph, the table immediately shows, are substantially larger than the branching lower bounds, which implies that our bounds on the region of uniqueness in the hard sphere model is substantially larger than the bounds by \cite{HouZass21}. For the hard spheres model this is not a new observation since our results agree (in this specific model) with the earlier disagreement percolation results by \cite{HTHou17}. This is due to the fact that, as discussed in Section \ref{sintro}, the range of the potential is the only information taken into account by \cite{HTHou17} and for the hard spheres model it happens to be the only relevant parameter. In the upcoming examples a further improvement can be observed.

Next we consider a modification of the hard sphere model, where an arbitrary overlap of spheres is possible, namely the penetrable spheres model considered by \cite{LikWatzLoew1998}. Let $0 < c < \infty$ and consider $v(x, y) = c \cdot \mathbf{1}\{ |x - y| \leq R \}$, $x, y \in \R^d$. The parameter $c$ (which in the hard sphere model is $\infty$) gives a measure on how valiantly spheres resist an overlap, but as $c$ is a fixed constant, the manner of the overlap plays no role in the spheres resistance of it. The RCM corresponding to this interaction function has connection function $\varphi(x, y) = (1 - e^{-c}) \cdot \mathbf{1}\{ |x - y| \leq R \}$. As we simulate from the RCM perspective, we parameterize $p = 1 - e^{-c} \in (0, 1)$ (in our case $p = 0.5$ and $p = 0.75$) which is then the probability that any two points with distance less than $R$ connect. Hence, the model can be interpreted as a modified Gilbert graph with an adjusted connection probability. The branching lower bounds for this RCM are simply those from the Gilbert graph divided by $p$. In order to be able to compare the simulation results for the different models, we again fix $R = 2$.

\begin{table*}[h] \label{Table 2}
	\centering
	\begin{tabular}{crcll} 
		\toprule
		{~~$d$~~} & \multicolumn{1}{c}{~$S$} & {~~Runs~} & \multicolumn{1}{c}{Approximation of $\gamma_c$} & \multicolumn{1}{c}{Branching bound} \\
		
		\cmidrule[0.4pt](r{0.125em}){1-1}
		\cmidrule[0.4pt](lr{0.125em}){2-2}
		\cmidrule[0.4pt](lr{0.125em}){3-3}
		\cmidrule[0.4pt](lr{0.125em}){4-4}
		\cmidrule[0.4pt](l{0.25em}){5-5}
		
		2  & ~1000 & ~5000 & \hspace{9mm} 0.48813 & \hspace{7mm} 0.15915 \\
		\myrowcolour
		3  & 500  & ~5000 & \hspace{9mm} 0.12503 & \hspace{7mm} 0.059683 \\
		4  & 300  & ~5000 & \hspace{9mm} 0.041814 & \hspace{7mm} 0.02533 \\
		\myrowcolour
		5  & 200  & ~5000 & \hspace{9mm} 0.01699 & \hspace{7mm} 0.011874 \\
		\bottomrule
	\end{tabular}
	\caption{Probability-adjusted Gilbert graph with $p = 0.5$}
\end{table*}

\begin{table*}[h] \label{Table 3}
	\centering
	\begin{tabular}{crcll} 
		\toprule
		{~~$d$~~} & \multicolumn{1}{c}{~$S$} & {~~Runs~} & \multicolumn{1}{c}{Approximation of $\gamma_c$} & \multicolumn{1}{c}{Branching bound}  \\
		
		\cmidrule[0.4pt](r{0.125em}){1-1}
		\cmidrule[0.4pt](lr{0.125em}){2-2}
		\cmidrule[0.4pt](lr{0.125em}){3-3}
		\cmidrule[0.4pt](lr{0.125em}){4-4}
		\cmidrule[0.4pt](l{0.25em}){5-5}
		
		2  & ~1000 & ~5000 & \hspace{9mm} 0.39376 & \hspace{6mm} 0.1061 \\
		\myrowcolour
		3  & 500  & ~5000 & \hspace{9mm} 0.096166 & \hspace{6mm} 0.039789 \\
		4  & 300  & ~5000 & \hspace{9mm} 0.03216 & \hspace{6mm} 0.016887 \\
		\myrowcolour
		5  & 200  & ~5000 & \hspace{9mm} 0.012459 & \hspace{6mm} 0.0079157 \\
		\bottomrule
	\end{tabular}
	\caption{Probability-adjusted Gilbert graph with $p = 0.75$}
\end{table*}

The observations in the penetrable spheres model are similar to those in the case of hard spheres. Even our conservative approximations of the critical intensity, and hence the region of uniqueness, improve the branching bounds (or Dobrushin method, \cite{HouZass21}) by factors larger than $3$ in two dimensions. In five dimensions the improvement is still by factors of more than $1.5$. Also the approximated values for $\gamma_c$ are substantially larger than in the Gilbert graph, which is an improvement over the classical disagreement percolation approach in \cite{HTHou17}, where only the range of $v$ is taken into account.

As a last model, we consider a pair interaction considered in the physics literature, namely the soft-sphere or inverse-power potential, which can be traced back at least to \cite{Row64}. Its interaction function is given through
\begin{align*}
	v(x, y) = \beta \cdot \frac{R^n}{|x - y|^n} \cdot \mathbf{1}\{ |x - y| \leq R \}, \quad x, y \in \R^d,
\end{align*}
where $\beta > 0$ is the characteristic energy and $n \in \N$ the hardness parameters. We fix $\beta = 1$ and consider $n \in \{ 6, 12 \}$. The connection function of the corresponding RCM is
\begin{align*}
	\varphi(x, y)
	= \bigg[ 1 - \exp\Big( - \frac{R^n}{|x - y|^n} \Big) \bigg] \cdot \mathbf{1}\big\{ |x - y| \leq R \big\} ,
\end{align*}
and the branching lower bounds for the critical intensity thus calculate as
\begin{align*}
	\Bigg( \frac{2 \pi^{d / 2}}{\Gamma\big( \tfrac{d}{2} \big)} \int_0^R \bigg[ 1 - \exp\Big( - \frac{R^n}{r^n} \Big) \bigg] \cdot r^{d - 1} \, \mathrm{d}r \Bigg)^{-1} .
\end{align*}
To ensure comparability, we consider $R = 2$.

\begin{table*}[h] \label{Table 4}
	\centering
	\begin{tabular}{crcll} 
		\toprule
		{~~$d$~~} & \multicolumn{1}{c}{~$S$} & {~~Runs~} & \multicolumn{1}{c}{Approximation of $\gamma_c$} & \multicolumn{1}{c}{Branching bound}  \\
		
		\cmidrule[0.4pt](r{0.125em}){1-1}
		\cmidrule[0.4pt](lr{0.125em}){2-2}
		\cmidrule[0.4pt](lr{0.125em}){3-3}
		\cmidrule[0.4pt](lr{0.125em}){4-4}
		\cmidrule[0.4pt](l{0.25em}){5-5}
		
		2  & ~1000 & ~5000 & \hspace{9mm} 0.35494 & \hspace{6mm} 0.084969 \\
		\myrowcolour
		3  & 500  & ~5000 & \hspace{9mm} 0.087095 & \hspace{6mm} 0.03276 \\
		4  & 300  & ~5000 & \hspace{9mm} 0.028471 & \hspace{6mm} 0.014254 \\
		\myrowcolour
		5  & 200  & ~5000 & \hspace{9mm} 0.01128 & \hspace{6mm} 0.0068329 \\
		\bottomrule
	\end{tabular}
	\caption{Soft-sphere model with $n = 6$}
\end{table*}

\begin{table*}[h] \label{Table 5}
	\centering
	\begin{tabular}{crcll} 
		\toprule
		{~~$d$~~} & \multicolumn{1}{c}{~$S$} & {~~Runs~} & \multicolumn{1}{c}{Approximation of $\gamma_c$} & \multicolumn{1}{c}{Branching bound}  \\
		
		\cmidrule[0.4pt](r{0.125em}){1-1}
		\cmidrule[0.4pt](lr{0.125em}){2-2}
		\cmidrule[0.4pt](lr{0.125em}){3-3}
		\cmidrule[0.4pt](lr{0.125em}){4-4}
		\cmidrule[0.4pt](l{0.25em}){5-5}
		
		2  & ~1000 & ~5000 & \hspace{9mm} 0.35272 & \hspace{6mm} 0.082379 \\
		\myrowcolour
		3  & 500  & ~5000 & \hspace{9mm} 0.083445 & \hspace{6mm} 0.031387 \\
		4  & 300  & ~5000 & \hspace{9mm} 0.027011 & \hspace{6mm} 0.013523 \\
		\myrowcolour
		5  & 200  & ~5000 & \hspace{9mm} 0.010873 & \hspace{6mm} 0.00643 \\
		\bottomrule
	\end{tabular}
	\caption{Soft-sphere model with $n = 12$}
\end{table*}

The simulations show that in the soft-sphere model, with the specific parameter specifications, Theorem \ref{t2} yields improvements on the region of uniqueness qualitatively similar to the penetrable spheres model.

The results of our simulations lead to the following observations. In the dimensions we consider, the true critical intensities are larger than the branching bounds by factors between $1.4$ and $4.5$, depending on the model and (mostly) on the dimension. Thus, Theorem \ref{t2} improves the regions of uniqueness of the corresponding Gibbs process by those very same factors as compared to \cite{HouZass21}. Moreover, for the penetrable and soft spheres, the critical values are (in some cases significantly) higher than those of the hard spheres which stands for an according improvement of the results by \cite{HTHou17}. While it is known from equation (6) of \cite{Pen96} that (for the Gilbert graph) the branching bound improves as the dimension grows, it also seems to improve (if much less so) if the overall connection probability in the RCM decreases and the critical intensity thus rises. 


Note that our approximation approach is really just an approximation and not founded on a solid theoretical basis. The benchmark model (hard spheres) and the corresponding existing simulations indicate that a further improvement of our approximations (by somewhere around $2$-$5\%$) is possible, but to our knowledge our simulations are the first for general RCMs. A different approach which would lead to approximations that come with confidence intervals is to prove a mean-field lower bound for the RCM in lines with equation (5.1) of \cite{Ziesche18} which holds for the Gilbert graph. This is an open problem that is certainly beyond the scope of this paper, but we strongly believe that a bound like this holds, at least for sufficiently regular connection functions. Still, our simulations for the Gilbert graph, where reference values from other simulations are available, suggest that our hands-on approach provides fairly solid and conservative approximations of the critical intensity of the RCM. Indeed, as we stay on the conservative side in every choice of simulation parameters, the results should slightly underestimate the corresponding true critical intensities but give a very good idea of their overall magnitude, in particular compared to the branching bounds.

\appendix

\section{Branching bounds on the random connection model}\label{sbranching}

In this section we provide a rigorous result as to when the RCM is subcritical. The proof is established via a bound on a suitable branching construction. We essentially work in the setting of Section \ref{subrcm}, that is, we consider a Borel space $(\BX, \mathcal{X})$ with a $\sigma$-finite measure $\lambda$ on $\BX$ and localizing structure $B_1 \subset B_2 \subset \cdots$ of sets with finite $\lambda$-measure. Let $\varphi : \BX \times \BX \to [0, 1]$ be a measurable and symmetric function and denote by $\Phi$ a Poisson process on $\BX$ with intensity measure $\lambda$. As before, we write $\Gamma$ (or $\Gamma^x$, or $\Gamma^{x, y}$) for the RCM with connection function $\varphi$ and vertex set $\Phi$ (or $\Phi + \delta_x$, or $\Phi + \delta_x + \delta_y$). We define the {\em pair connectedness} function $\tau : \BX \times \BX \to [0, 1]$,
\begin{align*}
\tau(x, y) := \mathbb{P}\big( x \xleftrightarrow{\Gamma^{x, y}} y \big) .
\end{align*}
We can immediately state the main result of this section, without introducing any further notation.
\begin{theorem} \label{theorem 2 branching bounds}
	Assume there exists a measurable function $g : \BX \to [0, \infty)$ such that
	\begin{align} \label{eq bound for branching}
	\int_\BX \varphi(x, y) \, \mathrm{d}\lambda(y) + \int_\BX \varphi(x, y) \, g(y) \, \mathrm{d}\lambda(y)
	\leq g(x) , \quad \lambda\text{-a.e. } x \in \BX .
	\end{align}
	Then, $(\varphi, \lambda)$ is subcritical.
\end{theorem}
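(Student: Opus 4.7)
The plan is to control the expected cluster size by a branching series. Concretely, I would show that $\BE[C_x(\BX)]\le 1+g(x)$ for $\lambda$-almost every $x\in\BX$, which immediately forces $C_x(\BX)<\infty$ a.s.\ and hence subcriticality of $(\varphi,\lambda)$. Set $f_0\equiv 1$ and
\[
f_{n+1}(x):=\int_\BX\varphi(x,z)\, f_n(z)\,\mathrm{d}\lambda(z),\quad n\ge 0;
\]
these are the iterates of the positive kernel $(x,A)\mapsto\int_A\varphi(x,z)\,\mathrm{d}\lambda(z)$, which will play the role of the mean offspring operator of a multitype branching process.

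The first step is the tree-graph / branching bound
\[
\BE[C_x(\BX)]\le \sum_{n=0}^\infty f_n(x),\quad x\in\BX.
\]
Every $y\in C_x\setminus\{x\}$ is reached from $x$ by at least one self-avoiding path through the points of $\Phi$, so the cardinality of $C_x\setminus\{x\}$ is bounded above by the total number of self-avoiding paths starting at $x$ and ending at a point of $\Phi$. Invoking the $n$-fold Mecke formula (the $\kappa\equiv 1$ case of \eqref{eGNZmulti}) together with the independence of the Bernoulli connection variables $U_{i,j}$ used in \eqref{eGamma}, the expected number of ordered $n$-tuples $(y_1,\ldots,y_n)$ of pairwise distinct Poisson points such that all edges $\{x,y_1\},\{y_1,y_2\},\ldots,\{y_{n-1},y_n\}$ are present in $\Gamma^x$ equals $f_n(x)$; summation over $n\ge 1$ yields the stated bound. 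Equivalently, the cluster of $x$ is dominated in expectation by the total progeny of a multitype Galton--Watson process whose particle at position $z$ produces a Poisson$\bigl(\int\varphi(z,y)\,\mathrm{d}\lambda(y)\bigr)$ number of offspring with positions sampled from $\varphi(z,\cdot)\,\mathrm{d}\lambda$.

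The second step is the inductive estimate $S_N(x):=\sum_{n=0}^N f_n(x)\le 1+g(x)$ for $\lambda$-a.e.\ $x$ and every $N\ge 0$. The base case $N=0$ is trivial. For the inductive step, the identity $S_N(x)=1+\int_\BX\varphi(x,z)\, S_{N-1}(z)\,\mathrm{d}\lambda(z)$ (immediate from the defining recurrence) combined with the inductive hypothesis and the standing assumption \eqref{eq bound for branching} yields
\[
S_N(x)\le 1+\int_\BX\varphi(x,z)\,\mathrm{d}\lambda(z)+\int_\BX\varphi(x,z)\, g(z)\,\mathrm{d}\lambda(z)\le 1+g(x).
\]
Letting $N\to\infty$ via monotone convergence produces $\sum_{n=0}^\infty f_n(x)\le 1+g(x)<\infty$ for $\lambda$-a.e.\ $x$, and the conclusion follows from the first step.

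The main technical obstacle will be making the branching bound of the first step fully rigorous: one has to dominate $C_x(\BX)-1$ pathwise by a count of self-avoiding paths (this is where over-counting is allowed only in one direction), invoke the multivariate Mecke equation together with the independence of the RCM's coin tosses, and interchange summation and integration. Once this is in place, the rest of the argument reduces to the short induction above and a monotone-convergence pass, and requires no further structural input beyond the pointwise inequality \eqref{eq bound for branching}.
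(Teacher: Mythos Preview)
Your proposal is correct and follows essentially the same strategy as the paper: both bound $\BE[C_x(\BX)]$ by $1+\sum_{k\ge 1}\int_\BX\varphi^{*k}(x,y)\,\mathrm{d}\lambda(y)$ (your $\sum_{n\ge 0}f_n(x)$) and then dominate this series by $1+g(x)$ via an iteration of the hypothesis \eqref{eq bound for branching}. The paper reaches the branching series through the pair connectedness function $\tau(x,y)=\BP\big(x\xleftrightarrow{\Gamma^{x,y}}y\big)$ and the one-step recursion $\tau\le\varphi+\varphi*\tau$ (plus an auxiliary truncation to the sets $B_\ell$ to dispose of the remainder $\varphi^{*n}*\tau$), whereas your direct self-avoiding path count via the multivariate Mecke equation arrives at the same series in one stroke and sidesteps the truncation.
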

Before we prove this result, we give some remarks.
\begin{remark}\rm
	It is easy to rewrite the theorem at hand in terms of $v$ if $\varphi$ is given through such a pair potential as in Section \ref{sproof}. A resemblance to \eqref{e1.3} then becomes obvious. In fact, we can strengthen assumption \eqref{e1.3} such that it implies the condition in the theorem. More precisely, assume that
	\begin{align*}
	q:= \esssup_{x \in \BX} \int_\BX \varphi(x, y) \, \mathrm{d}\lambda(y) < 1 .
	\end{align*}
	Then, choosing $g \equiv \frac{q}{1 - q}$, we obtain
	\begin{align*}
	\int_\BX \varphi(x, y) \, \mathrm{d}\lambda(y) + \int_\BX \varphi(x, y) \, g(y) \, \mathrm{d}\lambda(y)
	\leq q + \frac{q}{1 - q} \cdot q
	= \frac{q}{1 - q}
	= g(x) , \quad \lambda\text{-a.e. } x \in \BX .
	\end{align*}
	This is essentially the assumption in \cite{HouZass21}. The theorem in discussion is also weaker than assumption (KPU$_t$) in \cite{Jansen19}. Indeed, if there exists a measurable function $g : \BX \to [0, \infty)$ and some $t \geq 0$ such that
	\begin{align*}
	e^t \int_\BX \varphi(x, y) \, e^{g(y)} \, \mathrm{d}\lambda(y) \leq g(x) , \quad \lambda\text{-a.e. } x \in \BX,
	\end{align*}
	then \eqref{eq bound for branching} follows from $e^{g(y)} \geq 1 + g(y)$ and $e^t \geq 1$. In particular, Theorem \ref{theorem 2 branching bounds} improves Corollary C.1 in \cite{Jansen19}.
\end{remark}

\begin{proof}[Proof of Theorem \ref{theorem 2 branching bounds}.]
	First of all, notice that by an extension of Mecke's equation for the RCM, as stated in (4.1) of \cite{LastNestSchul21}, we have
	\begin{align*}
	\mathbb{E}\big[ C_x(\BX) \big] - 1
	= \mathbb{E}\bigg[ \int_\BX \I\big\{ x \xleftrightarrow{\Gamma^{x}} y \big\} \, \mathrm{d}\Phi(y) \bigg]
	= \int_\BX \mathbb{P}\big( x \xleftrightarrow{\Gamma^{x, y}} y \big) \, \mathrm{d}\lambda(y)
	= \int_\BX \tau(x, y) \, \mathrm{d}\lambda(y) .
	\end{align*}
	Observe that if two points $x, y \in \BX$ are connected via $\Gamma^{x, y}$, then either $x$ and $y$ are directly connected, or there lies at least one Poisson point in between them. Therefore,
	\begin{align*}
	\tau(x, y)
	\leq \varphi(x, y) + \mathbb{E}\bigg[ \int_\BX \varphi(x, z) \, \I\big\{ z \xleftrightarrow{\Gamma^{y}} y \big\} \, \mathrm{d}\Phi(z) \bigg] 
	= \varphi(x, y) + \int_\BX \varphi(x, z) \, \tau(z, y) \, \mathrm{d}\lambda(z) .
	\end{align*}
	Defining a convolution type operator in the obvious way, this inequality reads as
	\begin{align*}
	\tau \leq \varphi + \varphi * \tau .
	\end{align*}
	Iteration of this inequality yields that
	\begin{align*}
	\tau \leq \varphi + \varphi^{*2} + \dotso + \varphi^{*n} + (\varphi^{*n} * \tau)
	\end{align*}
	for every $n \in \N$. By a similar iteration of \eqref{eq bound for branching}, we see that
	\begin{align*}
	g(x) \geq \sum_{k = 1}^n \int_\BX \varphi^{*k}(x, y) \, \mathrm{d}\lambda(y) + \int_\BX \varphi^{*n}(x, y) \, g(y) \, \mathrm{d}\lambda(y) , \quad \lambda\text{-a.e. } x \in \BX ,
	\end{align*}
	for any $n \in \N$. In particular, we have
	\begin{align*}
	g(x) \geq \sum_{k = 1}^\infty \int_\BX \varphi^{*k}(x, y) \, \mathrm{d}\lambda(y) , \quad \lambda\text{-a.e. } x \in \BX .
	\end{align*}
	For each such $x \in \BX$ we thus have
	\begin{align*}
	\lim_{k \to \infty} \int_\BX \varphi^{*k}(x, y) \, \mathrm{d}\lambda(y) = 0 .
	\end{align*}
	Combining our previous observations, we find that, for $\lambda$-a.e. $x \in \BX$ and all $n, \ell \in \N$,
	\begin{align*}
	1 + \int_{B_\ell} \tau(x, y) \, \mathrm{d}\lambda(y)
	&\leq 1 + \sum_{k = 1}^n \int_{B_\ell} \varphi^{*k}(x, y) \, \mathrm{d}\lambda(y) + \int_{B_\ell} \int_\BX \varphi^{*n}(x, z) \, \tau(z, y) \, \mathrm{d}\lambda(z) \, \mathrm{d}\lambda(y) \\
	&\leq 1 + \sum_{k = 1}^n \int_{B_\ell} \varphi^{*k}(x, y) \, \mathrm{d}\lambda(y) + \lambda(B_\ell) \int_\BX \varphi^{*n}(x, z) \, \mathrm{d}\lambda(z) .
	\end{align*}
	Letting $n \to \infty$, we obtain
	\begin{align*}
	1 + \int_{B_\ell} \tau(x, y) \, \mathrm{d}\lambda(y)
	\leq 1 + \sum_{k = 1}^\infty \int_{B_\ell} \varphi^{*k}(x, y) \, \mathrm{d}\lambda(y)
	\end{align*}
	for $\lambda$-a.e. $x \in \BX$ and each $\ell \in \N$. With monotone convergence (let $\ell \to \infty$) we arrive at
	\begin{align*}
	\mathbb{E}\big[ C_x(\BX) \big]
	= 1 + \int_\BX \tau(x, y) \, \mathrm{d}\lambda(y)
	\leq 1 + \sum_{k = 1}^\infty \int_{\BX} \varphi^{*k}(x, y) \, \mathrm{d}\lambda(y)
	\leq 1 + g(x)
	< \infty
	\end{align*}
	for $\lambda$-a.e. $x \in \BX$. In particular, the cluster of each such $x$ (in $\Gamma^x$) is finite almost surely, that is to say, $(\varphi, \lambda)$ is subcritical.
\end{proof}

\section*{Acknowledgments}
We thank Sabine Jansen for several fruitful discussions of the topics surrounding our research.

\end{document}